\newcommand{\exclude}[1]{}
\def \conv{\textup{conv}}
\def \FF{ {\mathcal{F}}}
\def \SS{ {\mathcal{S}}}
\def \TT{ {\mathcal{T}}}
\def \B{{\mathcal{B}}}
\def \C{{\mathcal{C}}}
\def \rr{ {\mathbb{R}}}
\def \R{\rr}
\def \zz{ {\mathbb{Z}}}
\def \Z{\zz}
\newcommand{\tred}[1]{\texttt{\textcolor{red} {[#1]}}}
\renewcommand{\qed}{~~~~\hfill\rule{.05in}{.05in}}
\newcommand{\ceil}[1]{\left\lceil #1\right\rceil}
\newcommand{\seq}{\subseteq}
\newcommand {\beq}{\[}\newcommand {\eeq}{\]}
\newcommand {\beqn}{\begin{equation}}\newcommand {\eeqn}{\end{equation}}
\newcommand {\beqan}{\begin{eqnarray}}\newcommand {\eeqan}{\end{eqnarray}}
\newcommand {\beqa}{\begin{eqnarray*}}\newcommand {\eeqa}{\end{eqnarray*}}
\newcommand \ctx[2]{#1^T#2}
\newcommand \projk[2]{\textup{proj}_{#1}(#2)}
\def \tq{ {\,:\,}}
\def \SC{\textup{SC}}
\DeclareMathOperator    \proj      {proj}
\newcommand {\ignore}[1]{}
\renewcommand{\ell}{l}
\newcommand{\BF}{B^{\text{F}}\hspace{-0.04cm}}
\newcommand{\BU}{B^{\text{U}}\hspace{-0.04cm}}
\newcommand{\BLG}{B^{\text{L}}\hspace{-0.04cm}}
\newcommand{\BLGp}{B^{\text{L}^+}\hspace{-0.04cm}}
\newcommand{\BALT}{B^{\text{o}}\hspace{-0.04cm}}
\def \PLG{P_{L}}
\def \PLGp{P_{L+}}
\begin{document}  \large	 

\title{Binary Extended Formulations}
\author{Sanjeeb Dash \and Oktay G\"unl\"uk \and Robert Hildebrand}
\authorrunning{  Dash,    G{\"u}nl{\"u}k,   Hildebrand}
\institute{IBM T. J. Watson Research Center, Yorktown Heights, New York}
\date{\today}
\maketitle

\begin{abstract}
  We analyze different ways of constructing binary extended formulations of mixed-integer problems with bounded integer variables and compare their relative strength with respect to split cuts. 
  We show that among all binary extended formulations where each bounded integer variable is represented by a distinct collection of binary variables, what we call ``unimodular" extended formulations are the strongest.
  We also compare the strength of some binary extended formulations from the literature.
  Finally, we study the behavior of branch-and-bound on such extended formulations and show that branching on the new binary variables leads to significantly smaller enumeration trees in some cases.
\end{abstract}


\section{Introduction}\label{sec-intro}
\ignore{An extended formulation of {a mathematical programming}  formulation of a  optimization problem is one which uses additional variables to represent the same problem.
In integer programming, it is common to use extended formulations such that the linear programming (LP) relaxation of the extended
formulation is ``stronger'' (i.e., contained in) than the LP relaxation of original integer program (IP). 
(At one extreme, the extended formulation may be an LP such that the projection of its feasible region equals the integer hull of the original problem, see \cite{RWZ} for references to recent work on this topic.) 
}
For a given formulation of an optimization problem, an extended formulation  is one which uses additional variables to represent the same problem.
In integer programming, it is common to use extended formulations that lead to stronger  LP relaxations.
(Ideally, the extended formulation may have an LP relaxation whose projection onto the original space is integral, see \cite{RWZ} for references to recent work on this topic.) 
For binary integer programs, the lift-and-project methods of 
Sherali and Adams \cite{SA90}, Lov\'{a}sz and Schrijver \cite{LS91},  and Balas, Ceria and Cornu\'ejols \cite{BC93} yield such extended formulations. 
However, these extended formulations are, in general, too big to be practically useful as are those given by Bodur, Dash and G\"unl\"uk \cite{bdg} for general integer programs.

In this paper, we\ignore{analyze a number of previously studied} study extended formulations of bounded integer programs that are constructed by representing \ignore{each bounded} integer variables by a\ignore{linear} combination of new binary variables, possibly along with additional constraints on these binary variables.
Such ``binary extended formulations'' have been studied by Glover \cite{glover}, Sherali and Adams \cite{SA99}, and Roy \cite{roy}.
Given a polyhedral mixed-integer set 
\begin{equation}\label{eqp} P = \{ (x,y) \in U \times \rr^n: Ax + Cy \leq b\}
\end{equation}
where $A,C,b$ are matrices of appropriate dimension and $U=\{0, \ldots, u_1\}\times\cdots\times\{0, \ldots, u_l\}$ with $u_1,\ldots,u_l\in \Z$,  
Sherali and Adams \cite{SA99} studied the binary extended formulation:
\beqan
  Q = \{ (x,y,z) \in \rr^{l} \times \rr^n \times \{0,1\}^{q} : Ax + Cy &\leq& b,\nonumber\\ 
  x_i = \sum_{j=1}^{u_i}jz_{ij}, \sum_{j=1}^{u_i} z_{ij} \leq 1 &\mbox{for}& i=1, \ldots, l\}\label{eqq}
    \eeqan
where $q=\sum_{i=1}^lu_i$. 
For $i\in\{1,\ldots,l\}$, the binary variables $z_{ij}$ for $j\in\{0,\ldots,u_i\}$ are used to ``binarize" variable $x_i$.
Note that there is a one-to-one mapping between each $x_i \in \{0,\ldots, {u_i}\}$ and each  $(z_{i1}, \ldots, z_{i{u_i}}) \in \{0,1\}^{u_i}$ satisfying  $\sum_{j=1}^{u_i} z_{ij} \leq 1$ and $x_i = \sum_{j=1}^{u_i} jz_{ij}$.
%
%
More generally, Roy \cite{roy} defined a binary extended formulation of $P$ to be a set $S$ of the form
\begin{equation}\label{eqs}
  S = \{ (x,y,z) \in \rr^{l} \times \rr^n \times \{0,1\}^{q} : Ax + Cy \leq b, x = Tz, Dz \leq f \},
 \end{equation}
for some $q > 0$, and some matrices $D,T, f$, where the linear mapping $x = Tz$ maps 0-1 points in $\{z \in \rr^q: Dz \leq f\}$ to  $U$.
In this paper we will study reformulations where each bounded integer variable is ``binarized'' separately, i.e., it is represented by a distinct collection of binary variables.
Note that in both cases above, only the obvious domain of $x_i$ is used to binarize $x_i$, and the constraints $Ax + Cy \leq b$ do not play a role.

Owen and Mehrotra \cite{OM} proved some negative properties of two such binary extended formulations vis-a-vis the original integer program.   
In particular, they showed that 0-1 branching would perform worse on the extended formulation than on the original integer program in the sense that a much larger branch-and-bound tree would be generated in the former case, unless one branches in a specific manner. They thus argue that such binarization strategies are unlikely to be useful. 

However, binary extended formulations have some attractive theoretical properties with respect to cutting planes.  
Cook, Kannan, and Schrijver \cite{CKS} gave a mixed-integer program (MIP) with two bounded (between 0 and 2), integer variables  and one bounded, continuous variable that 
cannot be solved in finite time by any cutting plane algorithm that only generates split cuts.
But the binary extended formulation (\ref{eqq}) can be solved in finite time with split cuts as all binary programs have this property, see  Balas \cite{B79}.

Bonami and Margot \cite{BM} showed that certain types of cutting planes were more effective (both theoretically and computationally) when generated on a binary extended formulation as opposed to the original formulation. More strikingly, Angulo and Van Vyve \cite{AvV} showed that CPLEX \cite{cpx} requires significantly more time to solve an MIP formulation of the flow cover problem than a particular binary extended formulation (unlike Roy \cite{roy} and Sherali and Adams \cite{SA99}, they use the constraints $Ax + Cy \leq b$ of the mixed-integer set to construct the extended formulation).

In practice, binarization changes the behavior of  MIP solvers both in terms of  branching and  cut generation.
In this paper we consider known binary extended formulations as well as more general ways to construct them and compare their relative strength  with respect to adding certain families of cutting planes in the extended space.
For some pairs of previously studied binary extended formulations, we show that the projection of the split closure of one extended formulation onto the original space of variables is strictly contained in the corresponding projection of the other.
A natural question is whether it is possible to construct a strongest possible -- in the above sense -- binary extended formulation.
{ Our main result is that among all binary extended formulations where each bounded integer variable is separately binarized, what we call ``unimodular" extended formulations are strongest with respect to the projection of their split closures.
Both the formulation in (\ref{eqq}) and the extended formulation studied by Roy \cite{roy} and Bonami-Margot \cite{BM} belong to this class.
Finally, we study the behavior of branch-and-bound on a certain binary extended formulation and show that the observation by Owen and Mehrotra \cite{OM} does not always hold.}

The rest of the paper is organized as follows: in the next section, we formally define binary extended formulations and review split cuts.
In Section \ref{sec-basic}, we study basic properties of binary extended formulations. 
In Section \ref{sec:compare}, we compare a number of binary extended formulations in terms of the strength of their split closures.  
Finally, in Section \ref{sec:branch} we show that branching in the extended space can lead to smaller branch-and-bound trees when solving a mixed-integer program.

\section{Preliminaries}\label{sec-prelim}
{ We next formally define what we mean by binarization polytopes, binarization schemes, and binary extended formulations.
We also review split cuts and define unimodular and integral affine transformations.}

\subsection{Notation}\label{sec-not}
Let $P \subseteq \rr^n$ be a rational polyhedron  (all polyhedra in this paper are assumed to be rational).
 { Let  $I = \{1, \ldots, l\}$ be the index set of integer variables where $0\le l \leq n$.}
We call a set of the form \[P^I =  \{x \in P :  x_i \in \zz, \mbox{ for } i \in I\} \]
a \emph{polyhedral mixed-integer set}, and we call $P$ the linear relaxation of $P^I$.
For convenience, we assume that all variables defining $P^I$ are bounded, i.e.,
$P$ is defined by rational data as
\begin{equation} P=\Big\{ x\in\R^n\::\: Ax\le b,~ 0\le x_i\le u_i\text { for }i\in I\Big\}. \label{eq:p}\end{equation}
A polyhedral set $X \subseteq \R^n \times \R^q$ is called an \emph{extended formulation} of $P$ if 
$P= \proj_{x}(X)$ where $\projk{x}{X}$ stands for the orthogonal projection of points in $X$ to the space of the variables $x$.

For positive integers $q,u$, let $\Gamma^q_u$ be the set of all rational polytopes $B\subseteq\{(x,z)\in\R\times[0,1]^q\::\: 0\le x\le u\}$ such that 
\begin{equation}\label{eq:gamma} \proj_{x} \{B \cap (\R \times \{0,1\}^q)\} = \{0,1,\ldots,u\}.\end{equation}
Each polytope in $\Gamma^q_u$ can be used to ``binarize" a bounded integer variable $x \in \{0, \ldots, u\}$ using $q$ new binary variables; setting the new variables to 0-1 values forces $x$ to be an integer in $\{0, \ldots, u\}$.
We  refer to each polytope in $\Gamma^q_u$ as a {\em binarization polytope}.
We note that due to \eqref{eq:gamma}, if $B\in \Gamma^q_u$, then $B \cap (\R \times \{0,1\}^q)$ might contain points of the form $(x,z)$ and $(x,z')$ where $x\in \{0, \ldots, u\}$ and $z\not=z'$.
However, $B \cap (\R \times \{0,1\}^q)$ does  not contain two points of the form $(x,z)$ and $(x',z)$ where $x\not=x'$ as this would imply that the segment $\conv(\{x,x'\})$ belongs to the projection which contradicts the fact that \eqref{eq:gamma} is a discrete set.

The following are some  examples of binarization polytopes:
\beqan
\BF(u) &=& \{(x,z)\in\R\times[0,1]^{u}\::\:\textstyle x=\sum_{j=1}^{u} jz_j,~\sum_{j=1}^{u} z_j\le1\}, \label{bin-sa}\\[.2cm]
\BU(u)& =& \{(x,z)\in\R\times[0,1]^{u}\::\:\textstyle x=\sum_{j=1}^{u}  z_j,~1\ge z_1\ge  z_2\ge\ldots \ge z_u \ge 0\}, \label{bin-roy}\\[.2cm]
\BLG(u)& =& \{(x,z)\in\R\times[0,1]^{\ceil{ \log_2(u+1)}}\::\:\textstyle x = \sum_{j=0}^{\ceil{ \log_2(u+1)}-1} 2^j z_j\}. \label{bin-log}
\eeqan 
Note that sets  $\BF(u)$ and  $\BU(u)$ are contained in $\Gamma^u_u$, whereas $\BLG(u)$ has
only $\ceil{ \log_2(u+1)}+1$ variables. 
The set $ \BF(u)$, known as the {\em full}-binarization,  was studied by Sherali and Adams \cite{SA99} and by Angulo and Van Vyve \cite{AvV}.
The {\em unary}-binarization  $ \BU(u)$ was studied by Roy \cite{roy} and by Bonami and Margot \cite{BM}.
The {\em logarithmic}-binarization $\BLG(u)$ was studied by Owen and Mehrotra \cite{OM}.

Note that our definition does not require a bijection between integer points in $B\in\Gamma^q_u$ and $\{0,1,\ldots,u\}$.
However, we will later show that this is a desirable property and  is satisfied by  $\BF(u)$,  $\BU(u)$, and  $\BLG(u)$. Also note that $B \cap (\R \times \{0,1\}^q)$ contains at least $u$ distinct points and therefore $q\ge \ceil{ \log_2(u+1)}$.


Let $\mathcal{B} = (B^1, \ldots, B^l)$ be an ordered set of $l$ polytopes where each $B^i \in \Gamma^{q_i}_{u_i}$.
We will call ordered sets of the form $\mathcal{B}$ {\em binarization schemes} and in particular if all $B^i$ defining $\mathcal{B}$ are unary (or full or logarithmic) binarization polytopes, we will call the scheme a unary (respectively, full or logarithmic) binarization scheme.
Let $q=\sum_{i\in I} q_i$.
We define $P_{\B}$ to be the polyhedron
\begin{equation}\label{eq:pb}
P_{\B}=\Big\{ (x,z)\in\R^n\times\R^{q}\::\: x\in P,~ (x_i,z_i)\in B^{i} \text { for }i\in I\Big\}.
\end{equation}
Here we abuse notation, and let $z$ be a vector in $\R^q$, and $z_1 \in \R^{q_1}, \ldots, z_l\in\R^{q_l}$ be subvectors of $z$; i.e., $z^T = (z_1^T, \ldots, z_l^T)$, and $z_{ij}$ is the $j$th component of the $i$th subvector of $z$. 
$P_\B$ is an extended formulation of $P$, i.e., $\proj_x(P_\B) = P$ since for every $x \in P$ and $i \in I$, $0 \leq x_i \leq u_i$, and hence there exists $z_i\in \R^{q_i}$ such that $(x_i, z_i) \in B^i$. 
Let \beqn I_\B = \{1, \ldots, l, n+1, \ldots, n+q\}.\label{def:IB}\eeqn
We call $P_\B^{I_\B}$ a {\em binary extended formulation} 
of $P$; the integrality requirements on the new $z$ variables force the $x$ variables to be integral.
Therefore one can drop the integrality requirements on the $x$ variables in $P_\B^{I_\B}$, and get a valid extended formulation of $P^I$.
However, we will later argue that one may be able to obtain stronger split cuts by retaining the integrality of the $x$ variables.

\subsection{Integral, affine transformations and split cuts}\label{sec-uni}
For a given set $X \subseteq \rr^n$, we denote its convex hull by $\conv(X)$.
Let $P \subseteq \rr^n$ be a rational polyhedron, and let $1 \le l \leq n$ and $I = \{1, \ldots, l\}$.
Given $(\pi,\pi_0)\in\zz^n\times\zz$, the {\em split set} associated with $(\pi,\pi_0)$ is defined to be
$$ S(\pi,\pi_0)=\{x\in\rr^n\tq \pi_0<\ctx{\pi}{x}<\pi_0+1\}.$$
We call a valid inequality for $\conv(P\setminus S(\pi,\pi_0))$  a {\em split cut} for $P$ {\em derived from} $S(\pi,\pi_0)$.
If $\pi\in\zz^{l}\times \{0\}^{n-l}$ and $\pi_0 \in \zz$, then
$\zz^{l}\times\rr^{n-l}\subseteq \rr^n\setminus S(\pi,\pi_0), $
and split cuts derived from the associated split set are valid for $P^I$.
Let $\SS_n(I) = \{S(\pi, \pi_0) : \pi\in\zz^{l}\times \{0\}^{n-l}, \pi_0 \in \zz\}$.
We define the {\em split closure of $P$ with respect to $I$} as
\[ \SC(P,I)=\bigcap_{S\in\SS_n(I)}\conv\left(P\setminus S\right). \]
It is easy to see that for all $P,Q \subseteq \R^n$,
\beqn P \subseteq Q \implies  \SC(P,I) \subseteq \SC(Q,I).\label{containment}\eeqn

For $k=2, 3,\ldots$, we define $\SC^k(P,I) = \SC(\SC^{k-1}(P,I), I)$ where $\SC^1(P,I) = \SC(P,I)$.  
Split closures were first studied in ~\cite{CKS} and play an important role in the theory and practice of integer programming.


For a given polyhedral mixed-integer set $P^I$ and two binarization schemes $\mathcal{B}$ and $\mathcal{C}$, we want to compare the ``strength'' of the associated extended formulations $P_{\B}$ and $P_{\C}$ after applying the split closure operation.
As $P_{\B}$ and $P_{\C}$ may not belong to the same Euclidean space, we compare the projections of their split closures onto the original space. 

A  function $f: \rr^n \rightarrow \rr^n$ is a {\em unimodular transformation} if  $f(x) = Ux + v$ where $U$ is a $n \times n$ unimodular matrix (i.e., an integral matrix with determinant $\pm 1$) and $v \in \zz^n$.
The split closure operation is invariant under unimodular transformations, see \cite[Proposition 3]{dgl} and also \cite{dey}.
We generalize this result in Theorem~\ref{thm:trans} by giving a 
result on integral, affine transformations, i.e., functions $f : \rr^m \rightarrow \rr^n$ of the form $f(x) = Vx + v$ where $V$ is an integral $n \times m$ matrix, and $v \in \zz^n$.
For such an $f$,
and $S \subseteq \rr^n$, we define $f^{-1}(S) = \{x \in \rr^m : f(x) \in S\}$,
and for a collection  $\mathcal S$ of subsets of $\R^n$,  we define $f^{-1}(\mathcal S) = \{ f^{-1}(S) : S \in \mathcal S\}$.

\section{Basic properties of binarizations}\label{sec-basic}
In this section, we study how to get a stronger relaxation than the split closure of a polyhedron
by applying split cuts to a binary extended formulation.

{
Let $P$ be defined as in (\ref{eq:p}) and let $P_\B$ be defined as in (\ref{eq:pb}).
We observed in Section~\ref{sec-not} that $\projk{x}{P_\B} = P$.
It is shown in \cite{bdg} that if the new variables in $P_\B$ are treated as continuous variables, then the projection of the split closure of $P_\B$ is contained in the split closure of $P$. 
In other words, $\projk{x}{\SC(P_\B, I)} \subseteq \SC(P, I)$. 
Even though the containment can be strict for  extended formulations in general (without declaring the new variables integral),  we next show that this is not the case for $P_\B$.
}
For $i\in I$ let $w^i_0,w^i_{u_i}\in \{0,1\}^{q_i}$ be such that  
$(0,w^i_0),(u_i,w^i_{u_i})\in B_i$.
Clearly, $T^i=\conv\{(0,w^i_0),(u_i,w^i_{u_i})\}$ is contained in $B^i$ and is 1 dimensional.
Furthermore,
\beq
P_{\mathcal T}=\Big\{ (x,z)\in\R^n\times\R^{q}\::\: x\in P,~ (x_i,z_i)\in T^{i} \text { for }i\in I\Big\}
\eeq
is an extended formulation of $P$ contained in $P_{\B}$.
Since $P_{\mathcal T}$  has the same dimension as $P$, by \cite[Corollary 4.5]{bdg} we have $\projk{x}{\SC(P_\B, I)}\supseteq\projk{x}{\SC(P_\mathcal T, I)} = \SC(P, I)$.
Therefore, to get stronger split cuts from $P_\B$, the new variables should be explicitly declared as binary variables.

Another natural question is whether the original variables need to be declared integral in the extended formulation as the integrality of the new $z$ variables implies the integrality of the original $x$ variables. 
We will next argue that in some cases, this is necessary as  one gets weaker split cuts otherwise.

\subsection{Linear binarizations}
Let $B \in \Gamma^q_u$ be a binarization polytope. 
We say that $B$ is {\em affine} if all $(x,z)\in B$  satisfy  $x = \alpha^Tz + \alpha_0$ for some $\alpha \in \R^n$ and $\alpha_0 \in \R$; in this case
for all $(j,w^j)\in B$ with $w^j \in \{0,1\}^q$, we have $j = \alpha^T w^j +\alpha_0$. 
We call $B$ {\em linear} if it is affine and $\alpha_0 = 0$.
%
%
The binarization polytopes $\BU(u), \BF(u)$ and $\BLG(u)$ are all linear.
However not all binarizations are affine, as we show in the next example.
\begin{example}
Consider the binarization polytope associated with $x \in \{0,\ldots,3\}$ given by
\beqan\label{eq:non-linear}
B &=& \conv \left\{
(0,~ (0, 0)), (1,~ (1,0)), (2,~ (1, 1)), (3,~ (0,1)) \right\}\\
&=& \big\{(x ,z)\in [0,3] \times [0,1]^2: x - z_1 - z_2 \geq 0,~~~~ 	x + z_1 - 3 z_2 \geq 0,\nonumber\\
&&\hskip3.5cm	-x + z_1 + 3 z_2 \geq 0,~~	-x - z_1 + z_2 \geq -2 \big\}.\nonumber 
\eeqan
$B$ is not affine.  If it were, then for some $\alpha \in \R^2$ and $\alpha_0 \in \R$, we would have
$j = \alpha^T w^j +\alpha_0$ for $j = 0, 1,2,3$ where $w^0 = (0,0), w^1 = (1,0), w^2 = (1, 1)$, and $w^3 = (0,1)$.
This would imply that $\alpha_0 = 0$ (from $j=0$) and $\alpha$ satisfies $\alpha_1 = 1$, $\alpha_1+\alpha_2 = 2$, and $\alpha_2 = 3$ simultaneously, which is not possible.
\end{example}

Let $I=\{1,\ldots,l\}$ and consider a binarization scheme $\B=(B^1,\ldots, B^l)$ defined by affine binarization polytopes.
For each $B^i$ let $x_i= a_i^Tz_i + b_i$ hold where $z_i$ denotes the vector of binary variables associated with $x_i$.  
Furthermore, if all $a_i\in \Z^n, b_i \in \Z$, then 
\begin{equation}
\label{eq:non-original-variables}
\SC(P_\B, I_\B) = \SC(P_\B, I'),
\end{equation}
where $I_\B$, defined in \eqref{def:IB}, contains the indices of the original integer variables as well as the indices of the binarization variables, 
whereas $I'=I_\B\setminus I$ contains the indices the binarization variables only.
To see this, simply substitute each $x_i$ in the inequalities defining a split set $S \in \SS(I_\B)$ by $a_i^Tz_i + b_i$ to obtain an equivalent  split set in $\SS(I')$.
We next observe that \eqref{eq:non-original-variables} does not necessarily hold for non-affine binarization schemes.

\begin{proposition}
  There exists a polyhedral mixed-integer set $P^I$ and a binarization scheme $\B=(B^1,\ldots, B^{|I|})$ composed of non-affine binarization polytopes such that
$$
\SC(P,I) \subsetneq \proj_x(\SC(P_\B, I')), \mbox{ where } I'=I_\B\setminus I.
$$
\end{proposition}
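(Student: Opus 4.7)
The plan is to exhibit a small concrete counterexample with $|I|=2$ using the non-affine binarization $B$ from \eqref{eq:non-linear}. I would take $\B=(B,B)$ and
\[
P = \{(x_1,x_2) \in [0,3]^2 : x_1 + x_2 \leq 5/2\}.
\]
The $x$-split $(\pi,\pi_0) = ((1,1),2) \in \SS_2(I)$ satisfies $P \cap \{x_1+x_2 \geq 3\} = \emptyset$ and therefore produces the valid inequality $x_1 + x_2 \leq 2$, so $\SC(P,I) \subseteq \{x \in P : x_1+x_2 \leq 2\}$.

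As the candidate point I take $(x_1^*,x_2^*) = (5/4,5/4)$, which lies in $P$ but violates $x_1+x_2 \leq 2$, together with the symmetric lift $z^* = ((1/2,1/2),(1/2,1/2))$. A direct check of the four facet inequalities of $B$ shows $(5/4,1/2,1/2) \in B$, so $(x^*,z^*) \in P_\B$.

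The crux of the proof is to show $(x^*,z^*) \in \SC(P_\B,I')$, that is, $(x^*,z^*) \in \conv(P_\B \setminus S(\pi,\pi_0))$ for every $z$-split. When $(x^*,z^*) \notin S(\pi,\pi_0)$ the inclusion is automatic; the remaining cases are exactly those with $\sum_i \pi_i$ odd and $\pi_0 = (\sum_i \pi_i - 1)/2$. For each such split I would exhibit an explicit convex decomposition $(x^*,z^*) = \tfrac12(p_1+p_2)$ with $p_1,p_2 \in P_\B$ on opposite sides of the split. Two structural features of the non-affine $B$ provide the necessary slack: the cross section $B \cap \{z=(1/2,1/2)\}$ equals the whole segment $\{x \in [1,2]\}$, since both $(1,1/2,1/2) = \tfrac12(P_0+P_2)$ and $(2,1/2,1/2) = \tfrac12(P_1+P_3)$ lie in $B$; and the cross section $B \cap \{x=5/4\}$ is a nondegenerate quadrilateral containing the antipodal pair $(3/8,3/8)$ and $(5/8,5/8)$. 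Both features would fail in the affine case.

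The main obstacle is this last step, which a priori involves infinitely many splits but reduces to a finite case analysis modulo equivalence on the vertex set $\{P_0,P_1,P_2,P_3\}^2 \subseteq \{0,1\}^4$. I would dispatch the cases by combining the two degeneracies above with the $x_1 \leftrightarrow x_2$ symmetry of both $P$ and $\B$; for instance, for the split $z_1^1 + z_2^1 + z_1^2 \leq 1 \vee \geq 2$, the decomposition uses the pair
\[
p_1 = (5/4,5/4,3/8,3/8,1/4,1/2), \qquad p_2 = (5/4,5/4,5/8,5/8,3/4,1/2),
\]
both verifiable in $P_\B$, and for a split of the form $z_2^i \leq 0 \vee \geq 1$ the decomposition uses lifts with $x$-coordinates $(1/4,9/4)$ and $(9/4,1/4)$, obtained from the two extreme edges of $B$ in the $z_2$ direction.
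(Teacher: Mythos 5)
Your overall strategy mirrors the paper's: same non-affine polytope $B$ from \eqref{eq:non-linear}, a scheme $\B=(B,B)$, a point cut off by a single split in the $x$-space, a half-integral lift $(x^*,z^*)$, and two-point convex decompositions straddling each $z$-split. The individual facts you assert about $B$ check out (I verified $(5/4,1/2,1/2)\in B$, the two sample decompositions, and the degeneracy $B\cap\{z=(1/2,1/2)\}=[1,2]\times\{(1/2,1/2)\}$). But the crux of the proposition is showing that $(x^*,z^*)$ survives \emph{every} split in $\SS(I')$, and this is where your argument has a genuine gap. Splits are indexed by arbitrary integer vectors $\pi\in\Z^4$ (with $\sum_i\pi_i$ odd), not by subsets of the vertex set $\{P_0,\ldots,P_3\}^2$, so there is no evident "finite case analysis modulo equivalence": $\pi=(1,1,1,0)$, $\pi=(3,1,1,0)$ and $\pi=(5,-3,2,-1)$ are genuinely different disjunctions, and exhibiting decompositions for two of them says nothing about the rest. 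What is missing is a mechanism that disposes of all integer $\pi$ at once. The paper supplies exactly this: one perturbation pair forces a particular coefficient to vanish for \emph{any} split containing the point (because moving a single $z$-coordinate by $1/2$ shifts the split form by half that integer coefficient, which exits the open unit interval in both directions unless the coefficient is $0$); a second pair is then chosen so that, once that coefficient vanishes, the split form is integral at both perturbed points regardless of the remaining coefficients, giving a contradiction uniformly over all splits. Your outline contains no analogue of this coefficient-elimination step.

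A secondary concern is your choice of $x^*=(5/4,5/4)$ on the facet $x_1+x_2=5/2$ of $P$: every decomposition $(x^*,z^*)=\tfrac12(p_1+p_2)$ must then keep both $p_1,p_2$ on that facet, which substantially restricts the available perturbations. In particular, since $B$ is exact, you can never make all four $z$-coordinates integral at both $p_1$ and $p_2$ (that would force $x^*$ to be half-integral), so the "integralize everything with a nonzero coefficient" endgame is harder to reach than in the paper, where $\bar x=(1,1.5)$ has an integral first coordinate and the remaining fractional coordinate's coefficient has already been killed. Your instance may well work, but as written the proof of the key containment $(x^*,z^*)\in\SC(P_\B,I')$ is not established.
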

\begin{proof}
Let $P = \{x \in [0,3]^2 :  0 \leq x_2 - x_1 \leq 0.5 \}$. Let $\B = (B, B)$ where $B$ is defined by \eqref{eq:non-linear}. Let $I = \{1,2\}, I' = \{3,4,5,6\}$. 
  By definition, $\SC(P,I) = \SC(P)$, and
  \[P_\B = \{(x,z_1,z_2)\in \R^2 \times \R^2 \times \R^2 : (x_i, z_i) \in B, \mbox{ for } i=1,2\}, \]
  
Clearly, we have $\conv(P \cap \Z^2) = \{x \in [0,3]^2 : x_2 - x_1 = 0\} = \SC(P)$.  Therefore, $\proj_x(\SC(P_\B, I')) \supseteq \SC(P)$.
Let $(\bar x, \bar z) \in P_\B$ be defined by
  \[ \bar x = \left(\begin{array}{c}1 \\ 1.5 \end{array}\right), 
     \bar z = \left(\begin{array}{cc}0.5 & 0.5 \\ 0.5 & 0.5 \end{array}\right). \]
     Clearly $\bar x \notin \SC(P)$. We will show that $(\bar x, \bar z) \in \SC(P_\B, I')$ and thus $\proj_x(\SC(P_\B, I')) \neq \SC(P)$.

     Suppose $(\bar x, \bar z) \not\in \SC(P_\B, I')$. Then there exists a split set $S \in \SS(I')$ such that
     \begin{equation}\label{notinS}
       (\bar x, \bar z) \notin \conv(P_\B \setminus S).
     \end{equation}
     Let $S = \{(x,z) : \delta < a z_{11} + b z_{12} + c z_{21} + d z_{22} < \delta +1\}$ with $a,b,c,d,\delta \in \Z$.
   Then $(\bar x, \bar z) \in S$ which implies that $a \bar z_{11} + b \bar z_{12} + c \bar z_{21} + d \bar z_{22} = \delta + .5$ (as $\bar z$ is half-integral).
     
The points $(\bar x, z')$ and $(\bar x, z'')$ defined by $z' = \bar z + d_z$ and $z'' = \bar z - d_z$ where
  \[ d_z = \left(\begin{array}{cc}0 & 0 \\ 0.5 & 0 \end{array}\right) \]
          are both contained in $P_\B$ and  $(\bar x, \bar z) = .5(x', z') + .5(x'', z'')$.
          If $|c| \geq 1$, then $S$ contains neither $(x',z')$ nor $(x'', z'')$, contradicting (\ref{notinS}); therefore $c = 0$.

The points $(x', z')$ and $(x'', z'')$ defined by $(x',z') = (\bar x, \bar z) + (d_x,d_z)$ and $(x'', z'') = (\bar x, \bar z) - (d_x, d_z)$, where
  \[ d_x = \left(\begin{array}{c}1 \\ 1 \end{array}\right), d_z = \left(\begin{array}{cc}0.5 & 0.5 \\ 0 & 0.5 \end{array}\right), \]
          are both contained in $P_\B$ and $(\bar x, \bar z) = .5(x', z') + .5(x'', z'')$.
          For $z'$, the expression $az_{11}' + bz_{12}' + dz_{22}'$ is integral as $a,b,d$ are integral, $c = 0$, and $z_{11}', z_{12}'$ and $z_{22}'$ are integral.
          Therefore $(x',z') \not\in S$. Similarly, $(x'', z'') \not\in S$, and $x \in \conv(P_\B \setminus S)$, a contradiction.\qed
\end{proof}

We next show that an affine binarization polytope can be transformed into a linear binarization polytope using a unimodular transformation. 
As split closures are invariant under unimodular transformations, this observation implies that there is no additional benefit in using affine binarizations in terms of cutting.
\begin{proposition}
	Let $B \in \Gamma^q_u$ be an affine binarization polytope.  
	Then there exists a linear binarization polytope $B'\in \Gamma^q_u$ that is a  unimodular transformation of $B$. 
\end{proposition}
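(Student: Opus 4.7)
The plan is to build an explicit unimodular transformation by exploiting a point of $B$ lying over $x=0$. Since $B\in\Gamma^q_u$, there exists $w^0\in\{0,1\}^q$ with $(0,w^0)\in B$. Applying the affine relation $x=\alpha^Tz+\alpha_0$ at this point gives $\alpha_0=-\alpha^Tw^0$, so killing the constant will amount to shifting $z$ so that $w^0$ moves to the origin. The shift must be done carefully, though, because the transformed polytope must still live in $\R\times[0,1]^q$ and its integer points must still lie in $\{0,1\}^q$.

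The key construction is the map $f(x,z)=(x,Dz+w^0)$, where $D$ is the diagonal matrix with $D_{jj}=1-2w^0_j\in\{-1,+1\}$. The matrix representing $f$ on $\R\times\R^q$ is block-diagonal with blocks $1$ and $D$, hence integral with determinant $\pm1$, and the translation vector $(0,w^0)$ is integral, so $f$ is unimodular. Componentwise $z'_j=z_j$ if $w^0_j=0$ and $z'_j=1-z_j$ if $w^0_j=1$, so $f$ permutes $[0,1]^q$ bijectively onto itself and likewise $\{0,1\}^q$ onto itself; since the first coordinate is untouched, the box constraints $0\le x\le u$ and the range $\{0,\dots,u\}$ on integer $z'$ are preserved. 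This gives $B':=f(B)\in\Gamma^q_u$.

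It remains to check that $B'$ is linear. Since $D^2=I$, we have $z=D(z'-w^0)$, so for $(x,z')\in B'$,
\[
x=\alpha^Tz+\alpha_0=\alpha^TD(z'-w^0)+\alpha_0=(D\alpha)^Tz'-\alpha^TDw^0-\alpha^Tw^0.
\]
Direct calculation shows $(Dw^0)_j+w^0_j=0$ for every $j$, so the constant term vanishes and $x=(D\alpha)^Tz'$. Setting $\alpha':=D\alpha$ (integral if $\alpha$ is, but integrality of $\alpha$ is not required here) gives the desired linear relation with zero intercept.

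The main thing to get right is the choice of transformation: a naive translation $z\mapsto z-w^0$ is unimodular and kills $\alpha_0$, but pushes $B$ outside $[0,1]^q$ and thus outside $\Gamma^q_u$. Using the reflection $D$ to "flip" each coordinate where $w^0_j=1$ is what simultaneously sends $w^0$ to $0$ and keeps the unit cube (and its vertices) invariant; this is the only real subtlety in the argument.
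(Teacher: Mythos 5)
Your proof is correct and follows essentially the same route as the paper: both use the reflection-plus-shift $z\mapsto Dz+\bar w$ with $D_{jj}=1-2\bar w_j$ anchored at a $0$-$1$ point over $x=0$, verify it preserves the cube and its vertices, and check the constant term cancels. The algebra is organized slightly differently (you invert via $D^2=I$ while the paper writes $x$ in terms of $f^{-1}$), but the argument is the same.
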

\begin{proof}
	Let $(0,\bar w) \in B \cap (\R \times \{0,1\}^q)$.  
	Since $B$ is an affine binarization, there exist $\alpha \in \R^q$ and $\alpha_0 \in \R$ such that $B \subseteq \{(x,z) : x = \alpha^T z + \alpha_0\}$.  By definition, we must have $\alpha_0 = - \alpha^T \bar w$.
	
	Now define $f : \R^q \to \R^q$ as
	$$	f(z) = \bar w + Dz, 	$$
	where $D$ is the diagonal matrix with $D_{jj} = (1-2\bar w_j)$. 
	Since $\bar w \in \{0,1\}^q$, $D_{jj} \in \{-1,1\}$, and therefore $D$ is unimodular. Consequently, $f$ is invertible, with 
	$$	f^{-1}(z) = D^{-1}(z - \bar w) = D^{-1}z + \bar w.	$$
	Note that if $y=f(v)$ for $v \in \{0,1\}^q$, then $y_i = 1-v_i$ if $\bar w_i=1$ and  $y_i = v_i$, otherwise. In particular, $D\bar w = -\bar w$ and thus $f(\bar w)= 0$.
	Define $B' = \{(x,f(z)) : (x,z) \in B\}$.  
	As  $f(\{0,1\}^q) = \{0,1\}^q$,  $B'$ is a binarization polytope.  For any $(x,f(z)) \in B'$, we have
	$$
	x = \alpha^T z + \alpha_0 
	= \alpha^T f^{-1}(f(z)) - \alpha^T \bar w
	= \alpha^T (D^{-1}f(z) + \bar w) - \alpha^T \bar w
	= \alpha^T D^{-1} f(z)
	$$
	Thus, $B' \subseteq \{(x,z') : x = \alpha^T D^{-1} z'\}$, and hence $B'$ is a linear binarization.\qed
	
\end{proof}

\subsection{Perfect binarizations}
The set $\Gamma^q_u$, as defined in (\ref{eq:gamma}), contains infinitely many polytopes,
and therefore one can define infinitely many binary extended formulations of $P$ of the form \eqref{eq:pb}.
We next look at a natural finite subset of $\Gamma^q_u$.

A binarization polytope $B\in\Gamma^q_u$ is \emph{exact} if for each $x \in \{0,1,\ldots,u\}$ there is a unique $z \in \{0,1\}^q$ such that $(x,z) \in B$.  Therefore, $B$ is exact provided that $\proj_{x}\colon B \cap (\rr \times \{0,1\}^{q}) \to \{0, \dots, u\}$ is a bijection.  
We say that a binarization polytope $B \in \Gamma^q_u$ is {\em perfect} if it is exact and $B=\conv(B \cap (\R \times \{0,1\}^q))$.   
Thus, if $B$ is perfect, then it is the convex hull of $u+1$ points of the form $(k, w^k)$ for $k=0, \ldots, u$.
As each $w^k\in\{0,1\}^q$ 
there are at most $2^{q(u+1)}$ distinct 
perfect  binarization polytopes in $\Gamma^q_u$.
Also note that if $B\in\Gamma^q_u$, then it has dimension at most $u$.

\newcommand{\thickbar}[1]{\mathbf{\hat{\text{$#1$}}}}
\begin{proposition}
\label{uniqueness}
Consider the extended formulation $P_\B$ of $P$ where $\B=(B^1,\ldots, B^l)$ and $B^i$ is not perfect for some $i$.
If $\thickbar\B$ is obtained from $\B$ by replacing $B^i$ with $\thickbar B^i$ where $\thickbar B^i$ is perfect, then $P_{\thickbar \B} \subseteq P_\B$.
\end{proposition}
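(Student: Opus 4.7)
The plan is to exhibit a specific perfect binarization polytope $\thickbar B^i$ contained in $B^i$; the containment $P_{\thickbar \B}\subseteq P_\B$ will then follow immediately from the coordinatewise nature of definition \eqref{eq:pb}, since the $B^j$ with $j\neq i$ are unchanged. Thus the real work is to construct $\thickbar B^i$ and verify it is perfect.

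For the construction, I would use the hypothesis that $B^i\in\Gamma^{q_i}_{u_i}$: condition \eqref{eq:gamma} provides, for each $k\in\{0,1,\ldots,u_i\}$, at least one $w^k\in\{0,1\}^{q_i}$ with $(k,w^k)\in B^i$. Pick any such $w^k$ for each $k$ and set $\thickbar B^i := \conv\{(k,w^k) \tq k=0,1,\ldots,u_i\}$. By convexity of $B^i$, one immediately obtains $\thickbar B^i\subseteq B^i$.

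The technical heart of the argument is showing that $\thickbar B^i$ is perfect. Suppose $(x,z)\in \thickbar B^i$ with $z\in\{0,1\}^{q_i}$, and write $(x,z)=\sum_k\lambda_k(k,w^k)$ as a convex combination. Examining each coordinate $j$ of $z$: if $z_j=0$, then $w^k_j=0$ for every $k$ with $\lambda_k>0$, and symmetrically if $z_j=1$; hence $w^k=z$ whenever $\lambda_k>0$. Now invoke the observation made immediately after \eqref{eq:gamma} that $B^i\cap(\R\times\{0,1\}^{q_i})$ contains no two points with identical $z$-part and different $x$-part; since the chosen $(k,w^k)$ have pairwise distinct first coordinates, their $w^k$ are pairwise distinct, forcing a single active $\lambda_k=1$ and hence $(x,z)=(k,w^k)$. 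Consequently $\thickbar B^i\cap(\R\times\{0,1\}^{q_i})=\{(k,w^k) \tq k=0,\ldots,u_i\}$, a set with exactly one 0-1 point per $x$-value in $\{0,\ldots,u_i\}$. This shows simultaneously that $\thickbar B^i\in\Gamma^{q_i}_{u_i}$, that $\thickbar B^i$ is exact, and that $\thickbar B^i=\conv(\thickbar B^i\cap(\R\times\{0,1\}^{q_i}))$; in other words, $\thickbar B^i$ is perfect.

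Once perfection and $\thickbar B^i\subseteq B^i$ are in hand, the final containment is routine: any $(x,z)\in P_{\thickbar \B}$ satisfies $x\in P$, $(x_i,z_i)\in \thickbar B^i\subseteq B^i$, and $(x_j,z_j)\in B^j$ for $j\neq i$, so $(x,z)\in P_\B$ by \eqref{eq:pb}. The main obstacle, as anticipated, is the perfection check -- specifically ensuring that the convex hull of the chosen $u_i+1$ 0-1 points does not accidentally pick up additional 0-1 points that would break exactness. This is precisely where the earlier observation about $B^i$ containing no two 0-1 points with a common $z$-part is essential.
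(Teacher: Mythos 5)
Your proof is correct and follows essentially the same route as the paper: pick one $0$-$1$ witness $w^k$ for each $k\in\{0,\ldots,u_i\}$, take $\thickbar B^i=\conv\{(k,w^k)\}$, and conclude by monotonicity of the definition of $P_\B$. The only difference is that you explicitly verify perfection of the convex hull (using the paper's observation that no two $0$-$1$ points of $B^i$ share a $z$-part), a step the paper's proof asserts without detail.
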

\begin{proof}
Without loss of generality, suppose that $B^1\in \Gamma^{q_1}_{u_1}$ is not perfect.  
For each $k \in \{0, \ldots, u_1\}$, choose a corresponding $w_k^1 \in \{0,1\}^{q_1}$ such that $(k, w_k^1) \in B^1$.  If $B^1$ is not exact, this choice will not be unique for some values of $k$.
Let $\thickbar B^1 = \conv( \{ (k, w_k^1) : k=0, \dots, u_1\})$.  Then $\thickbar B^1$ is in $\Gamma^{q_1}_{u_1}$, perfect and contained in $B^1$.  It follows that $P_{\thickbar \B} \subseteq P_{\B}$.\qed
\end{proof}

The following is an example of a binarization polytope that is not exact.  
\begin{example}
Let $P\subset\R^n$ be a polyhedron with $0 \leq x_i \leq 7$ for $i\in I$.
Consider the binarization polytope 
$B = \{ (x,z) \in \R \times [0,1]^4  :  x = 5z_3 + \sum_{j=0}^2 2^j z_{j}\}$ and  
the associated extended formulation 
$$
P_{\B} = \{ (x,z) \in \R^n \times [0,1]^q : x \in P, x_i = 5z_{i3} + \sum_{j=0}^2 2^j z_{ij} \text{ for all } i \in I\},
$$
where $q=4|I|$.
Notice that  $x_i \in \{5,6,7\}$ has two possible representations, one with $z_{i3}=0$ and a second with $z_{i3}=1$.  
Therefore we can define another valid binary extended formulation by setting $z_{i3}$ to zero:
$P_{\B'} = P_{\B} \cap \{(x,z) : z_{i3}=0, ~i\in I\}$.
Since $P_{\B'} \subseteq P_{\B}$, by Equation~\eqref{containment}, we have $SC(P_{\B'}) \subseteq SC(P_\B)$.   
\end{example}


Note that $\BF(u)$ and $\BU(u)$ are perfect binarization polytopes whereas $\BLG(u)$ is exact but not perfect unless $u + 1$ is a power of 2.
By Proposition \ref{uniqueness} perfect binarization polytopes are more desirable as they lead to  stronger extended formulations.
We define the perfect version of $\BLG(u)$ as
$$
\BLGp(u) = \conv(\BLG(u) \cap (\R \times \{0,1\}^q)).
$$
We next show that the binarization polytope $\BLG(u)$ can be made perfect by adding at most $u$ inequalities to $\BLG(u)$.  For this, 
we adapt a result from~\cite[Corollary 2.6]{LS92} about knapsack polytopes with \emph{superincreasing} coefficients.   We give a proof here to explicitly construct the required inequalities in this context.
\begin{proposition}
\label{prop:cropped-cube}
Let $b$ be a positive integer. Let $\bar a \in \rr^n$ such that $\bar a_i = 2^{i-1}$. The binary knapsack polytope $P = \conv(\{ x \in \{0,1\}^n : \bar a^Tx \leq \bar b\})$ can be described by at most $n-1$ inequalities plus the bounds $0 \leq x \leq 1$.  Furthermore, these inequalities can be computed in polynomial time.
\end{proposition}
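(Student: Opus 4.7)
The plan is to give an explicit description via minimal cover inequalities, exploiting the superincreasing coefficients $\bar a_i = 2^{i-1}$: for such knapsacks the minimal cover inequalities together with the box bounds already suffice to describe $P$, and the claimed bound $n-1$ will come from counting minimal covers via the binary expansion of $\bar b$.

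First I would dispose of trivial ranges: if $\bar b < 0$ then $P = \emptyset$; if $\bar b \geq 2^n - 1$ then $P = [0,1]^n$ requires no non-bound inequality; and if $\bar b = 0$ then the single inequality $\bar a^T x \leq 0$, combined with $x \geq 0$, already forces $x = 0$. So I may assume $1 \leq \bar b \leq 2^n - 2$ and write $\bar b = \sum_{i=0}^{n-1} b_i 2^i$ in binary. Next I would characterize the minimal covers. A set $C = \{i_1 < \cdots < i_m\}$ is a minimal cover iff $\sum_{k=2}^{m} 2^{i_k - 1} \leq \bar b < \sum_{k=1}^{m} 2^{i_k - 1}$, since removing the smallest-weight element yields the largest residual sum. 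Fixing the smallest element $j = i_1$ and setting $D = C \setminus \{j\} \subseteq \{j+1, \ldots, n\}$, this reduces to $\sum_{i \in D} 2^{i-1} \in (\bar b - 2^{j-1}, \bar b]$. Every such subset sum is a multiple of $2^j$ and the interval has length $2^{j-1} < 2^j$, so at most one valid $D$ exists; writing $\bar b = q\cdot 2^j + r$ with $0 \leq r < 2^j$, the only candidate is $q\cdot 2^j$, and it lies in the interval iff $r < 2^{j-1}$, equivalently $b_{j-1} = 0$. Hence the number of minimal covers equals the number of zero bits of $\bar b$ in positions $0, \ldots, n-1$, which is at most $n - 1$ since $\bar b \geq 1$ guarantees at least one bit is set.

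To finish, I would invoke \cite[Corollary~2.6]{LS92}, which asserts that for superincreasing knapsacks the minimal cover inequalities $\sum_{i \in C} x_i \leq |C| - 1$ together with the bounds $0 \leq x \leq 1$ give a complete description of $P$. Combined with the count above, this yields the required at most $n - 1$ non-bound inequalities. Each cover is determined by its smallest element $j$ (a zero bit of $\bar b$) together with $D = \{i > j : b_{i-1} = 1\}$, so every cover can be read off the binary expansion of $\bar b$ in $O(n)$ time, giving an $O(n^2)$ overall construction. The only substantive step is the sufficiency of the cover inequalities, which is exactly the adapted Laurent--Schrijver fact we cite; the rest is bookkeeping over the binary representation of $\bar b$.
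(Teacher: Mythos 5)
Your proof is correct and arrives at exactly the same system of inequalities as the paper (the minimal cover inequalities indexed by the zero bits of $\bar b$), but the two arguments are organized differently. You characterize and count \emph{all} minimal covers directly: fixing the smallest element $j$, you observe that the residual sum must be a multiple of $2^j$ lying in a window of length $2^{j-1}$, so there is at most one minimal cover per $j$, and it exists precisely when bit $j-1$ of $\bar b$ is zero; this is a clean arithmetic way to get the bound $n-1$ and in fact shows the list is exhaustive, not just sufficient. For the substantive step --- that the minimal cover inequalities together with the box bounds describe $P$ --- you delegate entirely to \cite[Corollary 2.6]{LS92}. The paper instead makes that step self-contained: it writes down the same cover inequalities, observes that the resulting constraint matrix has the consecutive-ones property and is therefore totally unimodular (so the relaxation $Q$ is integral), and then verifies $P\cap\{0,1\}^n = Q\cap\{0,1\}^n$ by a direct two-sided argument. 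The paper explicitly says it is ``adapting'' the Laurent--Sassano result and chooses to reprove it in this context, so your reliance on the citation is a legitimate shortcut but buys less: a reader must unpack the max-flow min-cut / idealness statement of \cite{LS92} and translate it (via complementing variables) into the polytope description you need, whereas the paper's TU argument is immediate. Two small points: the reference \cite{LS92} is Laurent and \emph{Sassano}, not Laurent--Schrijver; and since the proposition assumes $\bar b$ is a positive integer, your $\bar b=0$ and $\bar b<0$ cases are vacuous (harmless, but unnecessary).
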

\begin{proof}
If $\bar b > 2^n-1$, then $P = [0,1]^n$.  Otherwise, 
we compute the binary expansion of $\bar b$ as $\bar b = \sum_{j=1}^n 2^{j-1} \bar x_j$ for $\bar x \in \{0,1\}^n$.
Let $J$ be the set of indices such that $\bar x_j = 0$; $|J| \leq n-1$ as $\bar b > 0$.  For each $j \in J$, define the vector $\bar a_j \in \{0,1\}^n$ as 
$$
a_{jk} = \begin{cases}
\bar x_k & \text{ if } k > j,\\
1 & \text{ if } k = j,\\
0 & \text{ if } k < j,
\end{cases}
$$
where $a_{jk}$ stands for the $k$th coefficient of the vector $a_j$.
Define $b_j = \sum_{j=1}^n a_{jk} - 1$.  The inequalities $a_j^Tx \leq b_j$ are known as cover inequalities for $P$.
Let $A$ be the matrix whose rows are $a_j^T$ for $j \in J$ and define $Q = \{ x \in [0,1]^n : Ax \leq b\}$.
Since $A^T$ has the so-called \emph{consecutive 1's property}, it follows that $A$ is totally unimodular.  Since $b \in \Z^I$, we have that $Q$ is an integral polytope.  

We claim that $P = Q$.
Since $P$ is also an integral polytope, it suffices to show that $P\cap \{0,1\}^n = Q \cap \{0,1\}^n$.   To this end, consider any $x \in \{0,1\}^n$.

Suppose first that $x\notin Q$.  Thus, for some $i \in I$, 
$a_i^T x > b_i$.  Then $x_j > a_{ij}$ for all $j=1, \dots, n$.  Then
$
\bar a^T x \geq \bar a^T a_i > \bar a^T \bar x \geq \bar b.
$
 Thus, $x \notin P$.

Conversely, suppose $x \notin P$.  Since $\bar x \in P$, $x \neq \bar x$.  Let $i$ be the largest index such that $x_i > \bar x_i$.  Then $x_i = 1$ and $\bar x_i = 0$.  But then $a_i^T x > a_i^T \bar x = b_i$.  
Thus,  $x \notin Q$.  

Hence $P\cap \{0,1\}^n = Q \cap \{0,1\}^n$ and  $P = Q$.  Lastly, since $|I| \leq n-1$, $P$ is described by the inequalities $0 \leq x \leq 1$ and at most $n-1$ additional inequalities. \qed
\end{proof}

Combining this result with the fact that 
	$
	\BLGp(u) = (\R \times Q) \cap \{(x,z) : x = \sum_{j=1}^{q} 2^{j-1} z_j\}
	$
where $Q$ is described in the proof of Proposition~\ref{prop:cropped-cube}, we have the following corollary.

\begin{corollary}
$\BLGp(u)$ can be described by one equation and at most $q-1$ inequalities for $q = \ceil{\log_2(u+1)}$ and the simple bound constraints.  These inequalities can be computed in polynomial time.
\end{corollary}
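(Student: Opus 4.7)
The plan is to read off the corollary directly from the displayed identification
$\BLGp(u) = (\R \times Q) \cap \{(x,z) : x = \sum_{j=1}^{q} 2^{j-1} z_j\}$
and then invoke Proposition~\ref{prop:cropped-cube}. So the first thing I would do is justify this identification carefully. By definition, $\BLG(u) \cap (\R \times \{0,1\}^q)$ consists of those $(x,z)$ with $z \in \{0,1\}^q$, $0 \le x \le u$, and $x = \sum_{j=1}^q 2^{j-1} z_j$. Substituting the equation into the bound $x \le u$, these are exactly the points $(x,z)$ with $z \in \{0,1\}^q$, $\sum_{j=1}^q 2^{j-1} z_j \le u$, and $x = \sum_{j=1}^q 2^{j-1} z_j$. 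Taking convex hulls and using that the equation $x = \sum_{j=1}^q 2^{j-1} z_j$ is linear in $(x,z)$, the convex hull is precisely $\{(x,z) : z \in Q,\ x = \sum_{j=1}^q 2^{j-1} z_j\} = (\R \times Q) \cap \{(x,z) : x = \sum_{j=1}^q 2^{j-1} z_j\}$, where $Q = \conv(\{z \in \{0,1\}^q : \sum_{j=1}^q 2^{j-1} z_j \le u\})$.

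Next I would apply Proposition~\ref{prop:cropped-cube} with $n = q$, $\bar a_i = 2^{i-1}$, and $\bar b = u$. That result yields an explicit description of $Q$ by the simple bounds $0 \le z \le 1$ together with at most $q-1$ additional cover inequalities, computable in polynomial time. Intersecting this description with the hyperplane $x = \sum_{j=1}^q 2^{j-1} z_j$ produces a description of $\BLGp(u)$ using exactly one equation, the simple bound constraints $0 \le z \le 1$, and at most $q-1$ extra inequalities.

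Since the cover inequalities from Proposition~\ref{prop:cropped-cube} are constructed in polynomial time, the whole description of $\BLGp(u)$ is also obtained in polynomial time, giving the claim. The main (and only nontrivial) obstacle is verifying the identification of $\BLGp(u)$ in the first paragraph, but this is really just a linearity check: the equation $x = \sum 2^{j-1} z_j$ is preserved under convex combinations, and convex hull commutes with a linear substitution, so the $x$-coordinate is forced once we describe the projection onto $z$-space as $Q$.
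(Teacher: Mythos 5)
Your proposal is correct and follows exactly the route the paper takes: identify $\BLGp(u)$ with $(\R \times Q) \cap \{(x,z) : x = \sum_{j=1}^{q} 2^{j-1} z_j\}$ and invoke Proposition~\ref{prop:cropped-cube} with $\bar b = u$. The paper simply asserts this identification without proof, so your verification that the convex hull commutes with the linear substitution is a welcome (and sound) elaboration rather than a deviation.
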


\subsection{The logarithmic binarization is better than the original formulation}\label{sec-log}

We next give an example for which the projection of the split closure of the logarithmic extended formulation is strictly contained in the split closure of the original formulation.
Let
\[P=\Big\{x\in[0,2]^2\::\:\textstyle2x_1+x_2\le5,~-2x_1+3x_2\le 3\Big\},\]
and the associated integer set $P^I\subset\Z^2$ where $I=\{1,2\}$. 
Now consider the extended formulation of $P$ obtained by using the logarithmic binarization scheme:
\[\PLG=\Big\{(x,z)\in\R^{2}\times[0,1]^4\::\:x\in P,~x_i = z_{i1} + 2z_{i2},~\text{ for }i=1,2\Big\}.\]
Note that the logarithmic binarization polytope $\BLG(u)$ is not perfect for $u=2$.
\begin{figure}[h]\begin{center}\begin{tikzpicture}[scale=1.5] 
\draw[dashed, , opacity=0.5] (0,0) -- (2,0) -- (2,2) -- (0,2) -- (0,0);
\draw [ ->](0,0) -- (0,2.2); \draw [ ->](0,0) -- (2.5,0); 

\draw [fill = black, opacity=0.1] (0,0) -- (2,0) -- (2,1) -- (1.5,2) -- (0,1) -- (0,0);
\draw  (0,0) -- (2,0) -- (2,1) -- (1.5,2) -- (0,1) -- (0,0);

\foreach \x in {0,1,2}\foreach \y in {0,1,2}
\draw[fill = white] (\x,\y) circle (.75pt);

\foreach \x in {0,1,2} \draw (\x cm,.1pt) -- (\x cm,-.1pt) node[anchor=north] {$\x$};
\foreach \y in {0,1,2} \draw (.1pt,\y cm) -- (-.1pt,\y cm) node[anchor=east] {$\y$};

\draw[fill = black] (1.5,2) circle (.65pt); \node at (1.5,2.1) {$p_2$};
\draw[fill = black] (1,5/3) circle (.65pt);\node at (.9,1.8) {$p_1$};
\draw[fill = black] (5/4,3/2) circle (.65pt); \node at (1.3,1.3) {$\bar x$};
\draw[fill = black] (5/3,5/3) circle (.65pt); \node at (1.8,1.8) {$p_3$};

\draw[dashed] (1,5/3) -- (2,1);\draw[dashed] (1,1) -- (1.5,2); 

\draw[dashed]  (5/3,5/3)-- (0,1); 
\end{tikzpicture}\end{center}
\caption{Polytope $P$}\label{fig:proofbypicture}
\end{figure}

\begin{theorem}
  For $P$ and $\PLG$ defined above, we have $\projk{x}{\SC(\PLG)} \subsetneq \SC(P)$.
\end{theorem}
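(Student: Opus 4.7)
The plan is to prove the strict inclusion in two stages: first by characterizing $\SC(P)$ explicitly, and then by exhibiting a valid inequality for $\proj_x(\SC(\PLG))$ that is not valid for $\SC(P)$.

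First I compute $\SC(P)$. The split $S = \{x : 1 < x_1 < 2\}$ gives $\conv(P \setminus S) = P \cap \{2 x_1 + 3 x_2 \leq 7\}$, the pentagon with vertices $(0, 0)$, $(2, 0)$, $(2, 1)$, $(1, 5/3)$, $(0, 1)$: indeed $\{x \in P : x_1 \leq 1\}$ has vertex $(1, 5/3)$ and $\{x \in P : x_1 \geq 2\}$ is the segment $\{(2, x_2) : x_2 \in [0, 1]\}$, so the chord between $(1, 5/3)$ and $(2, 1)$ is the line $2x_1 + 3x_2 = 7$. A finite case analysis of the remaining splits in $\SS_2(I)$ shows that no other direction produces a strictly tighter cut, essentially because the unique non-integer vertex $(3/2, 2)$ of $P$ can only be separated from the lattice by splits in the $x_1$-direction of this form. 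Hence $\SC(P)$ equals this pentagon; in particular the vertex $(1, 5/3)$ belongs to $\SC(P)$.

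Next I claim that $\proj_x(\SC(\PLG))$ satisfies the inequality $-2 x_1 + 5 x_2 \leq 5$, which is violated by $(1, 5/3)$ (giving $19/3 > 5$). Direct verification shows this inequality is valid for every integer point of $P$ (equality at $(0,1)$), though not for $P$ itself, since $(3/2, 2)$ gives $7 > 5$. To produce it as a split cut of $\PLG$, one exploits the non-perfectness of $\BLG(2)$: the LP polytope $\BLG(2) \cap \{x \leq 2\}$ has the non-integer vertex $(2, 1, 1/2)$, which admits splits involving the auxiliary variables. Two preliminary families of splits of $\PLG$ are instructive. The splits $\{0 < z_{i2} < 1\}$, $i \in \{1,2\}$, have their upper halves feasible only at the single point $x_i = 2$, so they yield the cover cuts $z_{i1} + z_{i2} \leq 1$. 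The split $\{1 < z_{12} + z_{22} < 2\}$ has infeasible upper half (it would force $x_1 = x_2 = 2$, excluded by $P$), so it yields $z_{12} + z_{22} \leq 1$. Together these restrict the lifting $(\bar x,\bar z)$ of any point near the boundary of $\SC(P)$ very substantially; a final split combining original coefficients $(-2, 5)$ on $x$ with a suitable coefficient on some $z_{ij}$ produces, upon projection, the desired $-2x_1 + 5x_2 \leq 5$.

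The main obstacle is to exhibit this final split $S^*$ explicitly and to verify that both halves of $\PLG \setminus S^*$, when projected to the $x$-space, satisfy $-2x_1 + 5x_2 \leq 5$. Geometrically, the three dashed lines in the figure—the inherited split cut $2x_1 + 3x_2 \leq 7$, the new cut $-2x_1 + 5x_2 \leq 5$, and a third cut of the same character through $(1,1)$ and $(3/2,2)$—all pass through $\bar x = (5/4, 3/2)$, which is consistent with $\proj_x(\SC(\PLG))$ having $\bar x$ as a vertex defined locally by these three tight cuts. Once $S^*$ is produced, strict inclusion is immediate: $(1, 5/3) \in \SC(P) \setminus \proj_x(\SC(\PLG))$, completing the proof of $\proj_x(\SC(\PLG)) \subsetneq \SC(P)$.
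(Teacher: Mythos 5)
There is a fatal gap at the first step: your claimed characterization of $\SC(P)$ is wrong, and consequently your witness point $(1,5/3)$ does not lie in $\SC(P)$. Consider the split set $S=\{x: -1<x_1-x_2<0\}$, which you omit from your case analysis. On one side, $P\cap\{x_1-x_2\le -1\}=\{(0,1)\}$ (on the facet $-2x_1+3x_2=3$ we have $x_1-x_2=x_1/3-1\ge -1$ with equality only at $(0,1)$, and elsewhere in $P$ the inequality $x_2\ge x_1+1$ fails); on the other side, $P\cap\{x_1\ge x_2\}$ has vertices $(0,0),(2,0),(2,1),(5/3,5/3)$. Hence $\conv(P\setminus S)$ has the edge through $(0,1)$ and $(5/3,5/3)$, which is exactly the line $-2x_1+5x_2=5$. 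So the inequality $-2x_1+5x_2\le 5$ that you propose to extract from the extended formulation is already a rank-one split cut for $P$ in the original space; it is valid for $\SC(P)$ and cuts off $(1,5/3)$ (which gives $19/3>5$). Your heuristic that the fractional vertex $(3/2,2)$ "can only be separated by splits in the $x_1$-direction" is false: $(3/2,2)$ satisfies $x_1-x_2=-1/2\in(-1,0)$ and is also separated by this diagonal split, which moreover cuts deeper on the left part of $P$. With the witness gone, the strictness claim collapses, since the nonstrict inclusion is already known from the general theory.

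A secondary but also real issue is that the second half of your argument is not carried out: you never exhibit the final split $S^*$ producing $-2x_1+5x_2\le 5$ from $\PLG$, and you acknowledge this as "the main obstacle." The paper avoids both problems by choosing the witness $\bar x=(5/4,3/2)$ — precisely the point you observe as the common intersection of the three dashed cuts, which is why no single split in the original space removes it — proving $\bar x\in\SC(P)$ via a parity argument on the three fractional points $p_1,p_2,p_3$ and the integer points $(1,1),(1,2),(2,2)$, and then deriving the explicit inequality $x_2\le 7/5$ (violated by $\bar x$) as a nonnegative combination of four concrete split and Gomory--Chv\'atal cuts for $\PLG$. If you want to salvage your approach, you would need both to switch to a witness that genuinely survives all splits of $P$ (your own geometric remark about $(5/4,3/2)$ points the way) and to produce the extended-space cuts explicitly.
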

\begin{proof}
We will show that the point $\bar x=(1.25,1.5)\in P$ is contained in the split closure of  $P$
 but not in the projection of the split closure of the associated logarithmic extended formulation $\PLG$.

Suppose $\bar x\not\in SC(P)$. 
Then  $\bar x\not\in \conv(P \setminus S)$ for some split set 
$S = \{x \in \R^2 : \pi_0 < \pi^Tx < \pi_0+1\}$ where $\pi, \pi_0$ are integral.
By definition, $P \setminus S = P \cap (S_1 \cup S_2)$ where $S_1 = \{x \in \R^2 : \pi^Tx \leq \pi_0\}$ and $S_2 = \{x \in \R^2: \pi^Tx \geq \pi_0 +1\}$ and $\Z^2 \subseteq S_1 \cup S_2$.
Note that $\bar x$ lies in the convex hull of $(2,1) \in P$ and $p_1=(1,5/3)\in P$ and therefore, $p_1\in S$.
In a similar manner, we can conclude that $p_2=(1.5,2) \in P$ and $p_3=(5/3,5/3)\in P$ are both contained in $S$
as $\bar x$ lies in the convex hull of $p_2$ and $(1,1) \in P$, and also in the convex hull of $p_3$ and $(0,1) \in P$.
See Figure~\ref{fig:proofbypicture}.

Consequently $(1,1)$ and $(1,2)$ are not contained in the same $S_i$, otherwise $p_1$ (a convex combination of the previous two points) would be contained in the same $S_i$, contradicting $p_1 \in S$.
Similarly, we can conclude that $(1,2)$ and $(2,2)$ are not in the same $S_i$ (otherwise $p_2$ would be contained in the same $S_i$),
and $(1,1)$ and $(2,2)$ are not in the same $S_i$ (otherwise $p_3$ would be contained in the same $S_i$).
Given that there are only two choices for $S_i$, we get a contradiction.

We next show that the inequality $x_2\le1.4$ is valid for the split closure of $\PLG$.
  We will first argue that the following inequalities are split cuts for $\PLG$:
\begin{eqnarray} 
   z_{21}  	+ \phantom{3}	z_{22} &\leq& 1  	\label{gc1}\\
   z_{22}  	- \phantom{3}	z_{12} &\leq& 0  	\label{s2}\\
    	- 2 			z_{11} + 3x_2 &\leq& 3 	\label{s3} \\
   2x_1  	+ 3 			x_2 &\leq& 7. 	\label{s4} 
  \end{eqnarray}
The inequality  (\ref{gc1}) can be obtained as a Gomory-Chv\'atal cut from the inequality  $2z_{21}+ {2}z_{22} \leq 3$ which is implied by  $z_{21}+ {2}z_{22} = x_2 \leq 2$ and $z_{21} \leq 1$.
To obtain inequality  (\ref{s2}), replace $x_i$ in $-2x_1 + 3x_2 \leq 5$ by $x_i = z_{i1} + 2z_{i2}$ to get $-2z_{11} -4z_{12} + 3z_{21} + 6z_{22} \leq 5$. Adding the valid inequalities $2z_{11} \leq 2$, $-3z_{21} \leq 0$ and $-2z_{22} \leq 0$ for $\PLG$ to the previous inequality, we get $4z_{22} - 4z_{12} \leq 7$ which yields (\ref{s2}) as a Gomory-Chv\'atal cut for $\PLG$.

To see that inequality  (\ref{s3}) is a  split cut, consider the disjunction $z_{12}\le0$ or  $z_{12}\ge1$.
If $z_{12}\ge1$, then $z_{11} = 0$ and $x_1=2$, and therefore $x_2\le1$ as $2x_1+x_2\le5$ for all $x\in P$ and (\ref{s3}) is satisfied.
On the other hand, if $z_{12}\le0$, then $x_1=z_{11}$ and 	(\ref{s3}) is implied by the second inequality defining $P$, namely, $-2x_1+3x_2\le 3$.
Finally,  inequality  (\ref{s4}) can be obtained as a split cut from the disjunction $x_1\le1$ or $x_1\ge2$.
If $x_1\le1$, then the inequality $-2x_1+3x_2\le 3$ defining $P$ implies $3x_2 \leq 5$ and (\ref{s4}) is satisfied. On the other hand, if $x_1=2$, then the inequality $2x_1+x_2\le5$ defining $P$ implies that $x_2\le1$ and therefore  (\ref{s4})  holds.

Combining inequalities (\ref{gc1})-(\ref{s4}) with the multipliers 4,4,1, and 1, respectively, gives the following inequality:
$$
2 x_1  + 6 x_2 -2 z_{11} -4 z_{12}+ 4 z_{21}  + 8 z_{22} ~\le~ 14.
$$
As $-2 z_{11} -4 z_{12}= -2x_1$ and $4 z_{21}  + 8 z_{22} =4x_2$, this simplifies to $10 x_2\le 14$ and therefore, $x_2\le 1.4$ is indeed valid for the split closure of $\PLG$.\qed
\end{proof}


\ignore{
\begin{figure}
\begin{center}
\includegraphics[scale = 0.3]{domination}
\input{dominance_relations.tikz}
\end{center}
\caption{\tred{Possible graphic to put here}}
\end{figure}
}
\subsection{Strength of single disjunctions in extended space}\label{sec-onedisj}

%
%
%

The following result shows that if a split disjunction in the extended space only involves the binarization variables associated with a single original variable, then it is not more useful than a split disjunction involving the original variable itself.

\begin{proposition}
\label{prop:individual-split}
Let $P^I$ be a given polyhedral mixed-integer set, with $I = \{1, \ldots, l\}$ and let $\B = (B^1, \ldots, B^l)$ a binarization scheme.
Let $z_1$ be the auxiliary binary variables associated with $x_1$.
Then, for any split set $S = \{ (x,z) \in  \R^{n+q} :  \pi_0 < \pi^T z_1 < \pi_0 + 1\}$ in the extended space, there exists a split set $S'$
\ignore{ = \{ x\in \R^n : \pi_0' <  x_1 < \pi_0 ' +1\}} 
in the original space such that 
$$
\proj_x(P_\B \setminus S) \supseteq P\setminus S'
$$
\end{proposition}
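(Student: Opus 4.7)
The plan is to show that the split $S$ in the extended space, since it only involves $z_1$, effectively induces a one-dimensional disjunction on $x_1$, and then to extract an integer splitting value $\pi_0'$ from this induced structure.

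First I would dispense with the other coordinates. For each $i \in \{2, \ldots, l\}$, since $B^i \in \Gamma^{q_i}_{u_i}$ is a polytope in $\R \times [0,1]^{q_i}$ whose binary points project to $\{0, \ldots, u_i\}$, convexity gives $\proj_{x}(B^i) = [0, u_i]$. Hence for any $x \in P$ and any $i \geq 2$ there exists $z_i$ with $(x_i, z_i) \in B^i$, and the only ``obstruction'' to lifting $x \in P$ to a point of $P_\B \setminus S$ concerns $x_1$ and $z_1$.

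Next I would define the two intervals
\[ X_- = \proj_{x_1}\bigl(B^1 \cap \{(x_1,z_1) : \pi^T z_1 \leq \pi_0\}\bigr), \qquad X_+ = \proj_{x_1}\bigl(B^1 \cap \{(x_1,z_1) : \pi^T z_1 \geq \pi_0 + 1\}\bigr). \]
Each is the projection of a (possibly empty) polytope contained in $\R \times [0,1]^{q_1}$, so each is a (possibly empty) interval in $[0, u_1]$. The key observation is that for every $k \in \{0, 1, \ldots, u_1\}$, any binary lifting $(k, w) \in B^1$ has $\pi^T w \in \Z$, so $\pi^T w \leq \pi_0$ or $\pi^T w \geq \pi_0 + 1$; thus $k \in X_- \cup X_+$. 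Moreover, directly from the definitions, $\proj_x(P_\B \setminus S) = \{x \in P : x_1 \in X_- \cup X_+\}$.

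The remaining task is to find an integer $\pi_0'$ such that $\{x_1 \in [0, u_1] : x_1 \leq \pi_0' \text{ or } x_1 \geq \pi_0' + 1\} \subseteq X_- \cup X_+$; then $S' = \{x : \pi_0' < x_1 < \pi_0' + 1\}$ works. If either $X_-$ or $X_+$ already contains $[0, u_1]$, a trivial split $S' = \emptyset$ suffices. Otherwise, WLOG assume $0 \in X_-$ (the other case is symmetric in the labels $-$ and $+$), and let $j$ be the largest integer with $j \in X_-$; since $X_-$ is an interval, $[0, j] \subseteq X_-$. By maximality of $j$ we have $j < u_1$ and $j+1 \notin X_-$, so $j+1 \in X_+$. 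Since $u_1 \in X_- \cup X_+$ and $u_1 > j$, an interval/integer argument forces $u_1 \in X_+$, and as $X_+$ is an interval we conclude $[j+1, u_1] \subseteq X_+$. Setting $\pi_0' = j$ then yields $P \setminus S' \subseteq \proj_x(P_\B \setminus S)$.

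The main subtlety I anticipate is being careful with the case analysis (which of $X_-, X_+$ contains $0$, which contains $u_1$, and whether these intervals overlap), together with verifying cleanly that the integer $j+1$ being in $X_+$ actually propagates to a full interval $[j+1, u_1] \subseteq X_+$ — this requires using both that $X_+$ is an interval and that every integer in $\{j+1, \ldots, u_1\}$ must lie in $X_- \cup X_+$ with the $X_-$ option precluded by maximality of $j$.
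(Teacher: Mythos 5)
Your proof is correct and follows essentially the same route as the paper's: both reduce to the induced disjunction on $x_1$ coming from the two sides of the split on $z_1$, use convexity of $B^1$ to see that the ``low'' and ``high'' sides each cover an interval of $[0,u_1]$ containing every integer they meet, and take $\pi_0'$ to be the largest integer on the low side (the paper phrases this contrapositively via a point $\hat x$ not covered by $\proj_x(P_\B\setminus S)$ and a fixed choice of binary lifting $p^t$ for each integer $t$, whereas you argue directly with the projected intervals $X_-,X_+$, which is a cosmetic difference). The only nitpick is that ``$S'=\emptyset$'' in the trivial case should be replaced by a split set disjoint from $P$, e.g.\ $\{x: -1<x_1<0\}$.
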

\begin{proof}
Let $B^1 \in \Gamma^{q_1}_u$.
$$P_\B \setminus S=P_\B\cap\{ (x,z) \in  \R^{n+q} : (x_1, z_1)\in  A_0 \cup A_1\},$$
where  $A_0 = \{ (x_1, z_1) \in B : \pi^T z_1 \leq \pi_0 \}$ and $A_1 = \{ (x_1, z_1) \in B : \pi^T z_1 \geq \pi_0 + 1\}$. 
As $B^1$ is a binarization polytope, there exists a point $p^t=(t,w^t)\in  B^1$ for each $t\in \{0, \dots, u\}$ such that $w^t\in \{0,1\}^{q_1}$.  Without loss of generality, assume that $p^0\in A_0$.  Let $s\in\{0,\ldots,u\}$ be the largest index such that $p^s\in A_0$.  We claim that choosing $\pi_0' = s$ is sufficient.

If $P \subseteq \proj_x(P_\B \setminus S)$, then the result holds trivially.  Thus, suppose this is not the case and consider any $\hat x \in P \setminus  \proj_x(P_\B \setminus S)$.  
Consider any distinct $r,t\in\{0,\ldots, u\}$ with  $t\ge \hat x_1\ge r$.  
Define $\hat p=(\hat x_1,\hat w)=\lambda p^t + (1-\lambda) p^r$ where  $\lambda = (\hat x_1-r)/(t-r)$.
Note that $ \lambda \in [0,1]$ and therefore  $\hat p\in B^1$.
As $P_\B$ is an extended formulation of $ P$ and $\hat x\in P$, there exists a point $(\hat x, \hat z) \in P_\B$.  
Moreover, as $\hat p\in B^1$,  we also have  $(\hat x,  z') \in P_\B$ where  $ z' = (\hat w, \hat z_2, \dots, \hat z_l)$ with $l = |I|$.

If $p^r,p^t\in A_0$, then, as $A_0$ is convex, we  have $\hat p \in A_0$.
Therefore $\hat x \in \proj_x(\{(x,z) \in P_\B : (x_1,z_1) \in A_0\}) \subseteq \proj_x(P_\B \setminus S)$.  
A similar argument holds if instead $p^r,p^t\in A_1$.  Therefore, if $p^r,p^t\in A_0$ or $p^r,p^t\in A_1$, then 
$\hat x \in \proj_x(P_\B \setminus S)$.

Thus, since we assumed that $p^0 \in A_0$, for all pairs $r,t \in \{0,\dots, u\}$ with $r\le \hat x_1\le t$, we have $p^r \in A_0$ and $p^t \in A_1$.  Therefore $p^0, \dots, p^s \in A_0$, $p^{s+1} \in A_1$ and $s <  \hat x_1 < s+1$.  Since $\hat x$ was chosen arbitrarily in $P \setminus  \proj_x(P_\B \setminus S)$, this shows that $P \setminus  \proj_x(P_\B \setminus S) \subseteq \{ x \in P :  s< x_1 < s+1\}$.  
\qed
\end{proof}

Also note that this result also implies that 
$$\proj_x(\conv(P_\B \setminus S)) \supseteq \conv(P\setminus S')$$
and we observe that split disjunctions in the extended space must involve  binarization variables associated with multiple original variables in order to generate cuts that cannot be obtained using original variables.
We next give an example where cuts from a single split set in the extended space can give the convex hull of a mixed-integer set while there is no split set, or more generally, no lattice-free convex set in the original space that can do the same.

\begin{example}
  Let $P = \{x \in [0,2]^2  : x_2 = \tfrac{1}{2} x_1 + \tfrac{1}{2}\}$ and $I=\{1,2\}$.
  Then $P^I$ consists of a single point $p=(1,1)$.
As $p$ is contained in the relative interior of $P$, there is no lattice-free convex set (e.g., a split set) $S\subseteq\R^2$ that satisfies $P^I=\conv(P\setminus S)$. Let $\B = (B_1,B_2)$, where $B_i = B^{\FF}(2)$ is the full binarization polytope \eqref{bin-sa} with $u=2$ for $i=1,2$, i.e., 
$$B_i =  \{(x_i,z_i)\in\R\times[0,1]^{2}\::\:\textstyle x_i=z_{i1}+2z_{i2},~z_{i1}+z_{i2}\le1\}.$$
Let $P_\B$ be the binary extended formulation of $P$ defined by $\B$.
For $(x,z) \in P_\B$,
\begin{equation}\label{eq:x2z12}
x_2 - z_{12} = \frac{1}{2} x_1+ \frac{1}{2} - z_{12} = \frac{1}{2} (z_{11} + 2z_{12}) + \frac{1}{2} - z_{12} = \frac{1}{2}(z_{11} + 1)> 0.
\end{equation}
Let $S = \{(x,z_1, z_2) : 0 < x_2 - z_{12} < 1\}$ be a split set in the space of $P_\B$.
Then $P_\B\setminus S$ consists of points in $P_\B$
that satisfy  $x_2-z_{12} \leq 0$ or $x_2-z_{12} \geq 1$.
Because of (\ref{eq:x2z12}), there are are no points in $P_\B$ that satisfy the first inequality, and all points in $P_\B$ satisfying $x_2 - z_{12} \geq 1$
also satisfy $(z_{11} + 1)\ge2$ and thus the equations $z_{11}= 1, z_{12} = 0$, $x_1 = 1$, and $x_2 = 1$. 
Therefore 
$$
\proj_x( \conv(P_\B \setminus S)) = \{ (1,1)\} = P^I.
$$
\end{example}

Bonami and Margot~\cite{BM} have already observed that the rank-2 simple split closure of the unary  binarization \eqref{bin-roy} always leads to the integer hull in the original space when the original set only has two integer variables.
The example above shows that in some cases this might  happen even with a single split cut in the extended space when the split disjunction combines binarization variables associated with different original variables.

\section{Relative strength of binarization schemes}\label{sec:compare}
We next compare various binary extended formulations with respect to the strength of the projection of their split closures.
Our main result implies that the full and unary-binarization schemes lead to the strongest extended formulations. 
We then give a hierarchy of other schemes considered earlier in the paper.

%
\subsection{Strength of unimodular binarization schemes}
We next characterize a  class of binarization schemes that have equally strong projected split closures.

\begin{definition} Let $B \in \Gamma^u_u$ (for some $u > 0$) be a perfect binarization polytope, i.e., there exist 0-1 vectors $w^0, \ldots, w^u \in \{0,1\}^u$ such that $B$ is the convex hull of the points $(i, w^i)$.
	We say that $B$ is {\em unimodular} if  the $u \times u$ matrix with columns $w^j - w^0$ for $j=1, \ldots, u$ is unimodular.
\end{definition}

Recall that the full binarization polytope $\BF(u)$ and the unary binarization polytope  $\BU(u)$ are perfect. 
Moreover,  $\BF(u)$ is equal to the convex hull of points $(j,e^j)$, where $e^j$ is the $j$th standard unit vector for $j=1, \ldots, u$, and $e^0$ is the all-zeros vector.
Similarly, $\BU(u)$ is equal to the convex hull of points $(j,d^j)$, where $d^j=\sum_{i=0}^j e^i$  for $j=0, \ldots, u$.
Consequently, both these polytopes are unimodular. 

We next present some technical results that we need for the main result.
We start off by generalizing a result in \cite[Proposition 3]{dgl} on unimodular transformations to integral, affine transformations. 
\begin{theorem}\label{thm:trans}
	Let $P \subseteq \rr^m$, $Q \subseteq \rr^n$,  $I = \{1,\ldots, \ell\}$, and $I' = \{1,\ldots, \ell'\}$ where $\ell\le m$ and $\ell'\le n$.
	Let $f(x) = (g(x^1), x^2)$ where $x= (x^1, x^2)$ and $g \colon \R^{\ell} \to \R^{\ell'}$ is  an integral, affine transformation. 
	If $f(P) \subseteq Q$, then for any integer $k \geq 1$, $$f(\SC^k(P, I)) \subseteq SC^k(Q, I').$$
\end{theorem}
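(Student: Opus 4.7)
The plan is to use preimages of split sets and the fact that affine maps commute with taking convex hulls. Write $g(y)=Vy+v$ with $V\in\Z^{\ell'\times\ell}$ and $v\in\Z^{\ell'}$, so that $f(x^1,x^2)=(Vx^1+v,x^2)$ is integral and affine on $\R^m$.

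First I would establish the key pullback property: for every split set $S=S(\pi,\pi_0)\in\SS_n(I')$ with $\pi=(\pi^1,0)\in\Z^{\ell'}\times\{0\}^{n-\ell'}$, the set $f^{-1}(S)$ is either empty or itself a split set in $\SS_m(I)$. Indeed, $f(x)\in S$ iff $\pi_0-(\pi^1)^Tv<(V^T\pi^1)^Tx^1<\pi_0+1-(\pi^1)^Tv$. The vector $(V^T\pi^1,0)\in\Z^\ell\times\{0\}^{m-\ell}$ is integral and the offset $\pi_0-(\pi^1)^Tv$ is integral; so if $V^T\pi^1\neq0$ this is an element of $\SS_m(I)$, and if $V^T\pi^1=0$ then because $(\pi^1)^Tv\in\Z$ lies outside the open unit interval the pullback is empty.

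Next I would prove the base case $k=1$. Fix $S\in\SS_n(I')$. If $f^{-1}(S)=\emptyset$ then $f(P)\subseteq Q\setminus S$, so trivially $f(\SC(P,I))\subseteq f(P)\subseteq\conv(Q\setminus S)$. Otherwise let $S'=f^{-1}(S)\in\SS_m(I)$. Since $x\in P\setminus S'$ forces $f(x)\in f(P)\setminus S\subseteq Q\setminus S$, we get $f(P\setminus S')\subseteq Q\setminus S$. Because $f$ is affine it preserves convex hulls, so $f(\conv(P\setminus S'))=\conv(f(P\setminus S'))\subseteq\conv(Q\setminus S)$. Therefore
\[
f(\SC(P,I))\;\subseteq\;f(\conv(P\setminus S'))\;\subseteq\;\conv(Q\setminus S).
\]
Intersecting over all $S\in\SS_n(I')$ yields $f(\SC(P,I))\subseteq\SC(Q,I')$.

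For the induction on $k$, apply the base case to the pair $(P',Q')=(\SC^{k-1}(P,I),\SC^{k-1}(Q,I'))$. By the inductive hypothesis $f(P')\subseteq Q'$, so the base case gives $f(\SC(P',I))\subseteq\SC(Q',I')$, which is exactly $f(\SC^k(P,I))\subseteq\SC^k(Q,I')$.

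The only subtle point is the pullback step: one has to check both that integrality of $V$ and $v$ carries over to integrality of the pulled-back normal and offset, and that the degenerate case $V^T\pi^1=0$ does not produce a nontrivial split set that could be violated by $f(P)$. Once that is handled cleanly, the rest is just the standard interplay between affine maps and convex hulls together with induction.
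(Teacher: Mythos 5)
Your proof is correct and follows essentially the same route as the paper: pull back each split set in $\SS_n(I')$ to a split set in $\SS_m(I)$ using integrality of $V$ and $v$, use that the affine map $f$ commutes with convex hulls to get $f(\conv(P\setminus f^{-1}(S)))\subseteq\conv(Q\setminus S)$, intersect over all $S$, and induct on $k$. The only cosmetic difference is that you treat the degenerate case $V^T\pi^1=0$ explicitly (the paper leaves it implicit, since a split set with zero normal is empty and harmless), and you organize the argument per split set rather than through a single intersection-commutation inequality.
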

\begin{proof}
	
	Let $\mathcal S$ be a collection of sets in $\R^n$.
	For any $S \in \mathcal{S}$, if $x \in S $ then $f(x) \in f(S)$, and therefore
	\beqn \textstyle f\left(\bigcap_{S \in \mathcal{S}} S\right) \subseteq \bigcap_{S \in \mathcal{S}} f(S).\label{eq:fsub}\eeqn
	Furthermore, note that $f(\sum_{i=1}^t \lambda_i x_i) = \sum_{i=1}^t\lambda_if(x_i)$ for any $x_1, \ldots, x_t \in \R^m$ and any $\lambda_1, \ldots, \lambda_t\in \R$ satisfying $\sum_{i=1}^t \lambda_i = 1$.
	Therefore, for any $T \subseteq \rr^m$
	\begin{equation}\label{conv-commute}
	f (\conv(T))= \conv(f(T)).
	\end{equation}
	In addition,  
	$$
	f(P) \setminus S = \{ f(x) : x \in P, f(x) \notin S\} = \{ f(x) : x \in P, x \notin f^{-1}(S)\} = f(P \setminus f^{-1}(S))
	$$
	Taking $T = P \setminus f^{-1}(S)$ in (\ref{conv-commute}), we see that
	\beqn f( \conv(P \setminus f^{-1}(S))) = \conv(f(P)\setminus S).\label{eq:fconv}\eeqn

	Let $g(x_1) = Vx_1 + v$ where $V \in \Z^{\ell' \times \ell}$ and $v \in \Z^{\ell'}$.
	Consider the split set  $S\in\SS_n(I')$ given by
	$S = \{(y_1,y_2) \in \R^{\ell'} \times \R^{n - \ell'} : \pi_0 < \pi_1^Ty_1 + \pi_2^Ty_2 < \pi_0+1\},$
	where $\pi_1, \pi_2$ and $\pi_0$ are integral and $\pi_2 = 0$.
	Then
	\beqa
	f^{-1}(S) &=& \{(x_1,x_2) \in \R^\ell \times \R^{m-\ell}: \pi_0 < \pi_1^T(Vx_1 + v) < \pi_0 + 1\} \\
	&=& \{(x_1,x_2) \in \R^\ell \times \R^{m-\ell}: \pi_0 - \pi_1^Tv < \pi_1^TVx_1 < \pi_0 + 1 - \pi_1^Tv\}.
	\eeqa
	As $\pi_1^TV$  and  $\pi_1^Tv$ are integral, we see that $ f^{-1}(S)$ is a split set in $\SS_m(I)$.
	Therefore, $\{f^{-1}(S) : S \in \SS_n(I')\} \subseteq \SS_m(I)$, and
	\begin{equation}\label{eq:subs} \SC(P, I) = \bigcap_{S \in \SS_m(I)}\conv(P \setminus S)
	\subseteq \bigcap_{S \in \SS_n(I')}\conv(P \setminus f^{-1}(S)).
	\end{equation}
	Then 
	\begin{eqnarray*} f(\SC(P, I)) &\subseteq& f(\bigcap_{S \in \SS_n(I')}\conv(P \setminus f^{-1}(S))) \\
		&\subseteq& \bigcap_{S \in \SS_n(I')} f( \conv(P \setminus f^{-1}(S)))\nonumber\\
		&=& \bigcap_{S \in \SS_n(I')}\conv(f(P) \setminus S) ~\subseteq~  SC(Q,I'),
	\end{eqnarray*}
	where the first inclusion follows from (\ref{eq:subs}) and the second one follows from \eqref{eq:fsub}. The next equality follows from \eqref{eq:fconv} and the final inclusion follows from \eqref{containment} and the fact that $P\seq Q$.
	
	Therefore the claim holds for $k=1$ and the result follows by induction on $k$.\qed
\end{proof}

\begin{lemma}\label{lem:fexists}
	Let $q, u$ be positive integers and  let $B \in \Gamma^u_u$ and $C \in \Gamma^q_u$. 
	If $B$ is unimodular binarization polytope then there exists an integral, affine transformation of $B$ into $C$. 
\end{lemma}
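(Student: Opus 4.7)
The plan is to construct an explicit integral affine map $f \colon \R \times \R^u \to \R \times \R^q$ of the form $f(x,z) = (x, Vz + v)$ that sends the vertices of $B$ into $C$; since $B$ is the convex hull of finitely many such vertices, this will imply $f(B) \subseteq C$.

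First I would list the natural candidate vertices. Because unimodular binarization polytopes are perfect by definition, we may write $B = \conv\{(j, w^j) : j = 0,1,\ldots,u\}$ where each $w^j \in \{0,1\}^u$. On the other side, because $C \in \Gamma^q_u$, the definition in \eqref{eq:gamma} guarantees, for every $j \in \{0,\ldots,u\}$, the existence of some $v^j \in \{0,1\}^q$ with $(j,v^j) \in C$; fix one such $v^j$ for each $j$. The plan is then to find an integral $V \in \Z^{q\times u}$ and $v \in \Z^q$ so that $Vw^j + v = v^j$ for every $j$.

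Next I would solve this linear interpolation. Setting $j=0$ forces $v = v^0 - Vw^0$, and the remaining conditions reduce to $V(w^j - w^0) = v^j - v^0$ for $j = 1,\ldots,u$. Writing $W$ for the $u\times u$ matrix whose columns are the vectors $w^j - w^0$ and $V'$ for the $q\times u$ matrix whose columns are $v^j - v^0$, this becomes $V W = V'$. The unimodularity of $B$ is precisely the statement that $W$ is unimodular, so $W^{-1}$ is integral; hence $V := V' W^{-1}$ is integral, and $v = v^0 - Vw^0 \in \Z^q$ follows automatically since all the ingredients are integral.

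Finally I would assemble the map. Define $f(x,z) = (x, Vz + v)$, which is integral and affine (the associated linear part has the block form with identity on the $x$-coordinate and $V$ on the $z$-coordinate, and the translation $(0,v)$ is integral). By construction $f(j, w^j) = (j, v^j) \in C$ for every $j \in \{0,\ldots,u\}$, so
\[
f(B) \;=\; f\bigl(\conv\{(j,w^j)\}\bigr) \;=\; \conv\{(j,v^j)\} \;\subseteq\; C,
\]
where the middle equality uses affineness of $f$ and the inclusion uses convexity of $C$. The only nontrivial step is the integrality of $V$, and that is exactly where the unimodularity hypothesis on $B$ is used; the rest is bookkeeping.
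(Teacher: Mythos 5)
Your proposal is correct and follows essentially the same route as the paper: both identify the vertex differences of $B$ as a unimodular matrix $W$, solve the interpolation condition by setting the linear part to (target differences)$\cdot W^{-1}$, which is integral precisely because $W^{-1}$ is, and then conclude $f(B)\subseteq C$ by commuting the affine map with the convex hull. The only difference is notational (your $W$, $V'$ are the paper's $V$, $W$ respectively).
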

\begin{proof}
	As $B$ is perfect, $B=\conv(\{(j, v^j): j=0, \ldots, u\})$ for some  $v^j\in\{0,1\}^u$ and  $C$ contains points $(j,w^j)$ for some  $w^j\in\{0,1\}^q$ for all $j=0, \ldots, u$. 
	Let $V$ be the $u\times u$ unimodular matrix with columns $v^j-v^0$ and let $W$ be the integral matrix with columns $w^j-w^0$.

	Define the integral affine transformation $f : \R^u \rightarrow \R^q$ as $f(z) = WV^{-1}z-WV^{-1}v^0+w^0$ and note that $WV^{-1}$ is an integral matrix and $v^0,w^0$ are integral vectors. Furthermore, 
	\beqa f(v^j) &=& WV^{-1}v^j-WV^{-1}v^0+w^0~=~WV^{-1}(v^j-v^0)+w^0.\eeqa
	As $WV^{-1}V=W$, we have $f(v^j)=w^j$.
	
	In addition, let $g(x,z)  = (x, f(z))$ and note that  $g : \R^{u+1} \rightarrow \R^{q+1}$ is also an integral affine transformation.
	As $g$ is affine, it commutes with the convex hull operator $\conv(\cdot)$ and
	\begin{align*}
	g(B) = g(\conv(\{(j, v^j): j=0, \ldots, u\})) &= \conv(\{ g(j, v^j) : j=0, \ldots, u\})\\
	&= \conv(\{(j, w^j) : j=0, \ldots, u\}) \subseteq C.
	\end{align*}\qed
\end{proof}

We now prove the main result of this section.
\begin{theorem}\label{thm:main}
  Let $P$ be defined as in (\ref{eq:p}), and let $I = \{1, \ldots, l\}$.
 Consider a binarization scheme $\B= (B^1, \ldots, B^l)$  where each $B^i$ is unimodular  and let $\C= (C^1, \ldots, C^l)$ be an arbitrary binarization scheme.
  Then  for all integers $k\ge1$, 
  $$\projk{x}{\SC^k(P_\B, I_\B)} \subseteq \projk{x}{\SC^k(P_\C, I_\C)}.$$  
\end{theorem}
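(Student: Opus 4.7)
The plan is to reduce this to a single application of Theorem~\ref{thm:trans} by constructing an integral, affine transformation $F$ from the ambient space of $P_\B$ into that of $P_\C$ which (i) acts as the identity on the original variables $x$, and (ii) maps $P_\B$ into $P_\C$. Once such an $F$ is available, Theorem~\ref{thm:trans} will give $F(\SC^k(P_\B,I_\B))\subseteq \SC^k(P_\C,I_\C)$, and since $\proj_x\circ F=\proj_x$, projecting onto $x$ delivers the desired containment for every $k\ge 1$.

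The coordinate-wise ingredients come directly from Lemma~\ref{lem:fexists}. For each $i\in I$, the polytope $B^i\in\Gamma^{u_i}_{u_i}$ is unimodular and $C^i\in\Gamma^{q_i}_{u_i}$ has the same upper bound $u_i$, so the lemma produces an integral, affine map $f_i\colon\R^{u_i}\to\R^{q_i}$ whose $x_i$-preserving lift $g_i(x_i,z_i)=(x_i,f_i(z_i))$ sends $B^i$ into $C^i$. Gluing these together yields
$$
F(x,\,z_1,\ldots,z_l) \;=\; (x,\,f_1(z_1),\ldots,f_l(z_l)),
$$
which is integral and affine. If $(x,z)\in P_\B$, then $x\in P$ is preserved and each $(x_i,f_i(z_i))\in C^i$ by construction, so $F(x,z)\in P_\C$.

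To fit the precise hypothesis of Theorem~\ref{thm:trans}, I would reorder coordinates so that all integer variables come first: the block $x^1=(x_I,z)$ of integer variables of $P_\B$ is transformed by the integral, affine map $(x_I,z)\mapsto(x_I,f_1(z_1),\ldots,f_l(z_l))$ (identity on $x_I$ and $f_i$ on each $z_i$), while the continuous block $x^2=(x_{l+1},\ldots,x_n)$ is kept fixed; the analogous grouping aligns the codomain with $I_\C$. Applying Theorem~\ref{thm:trans} with $P=P_\B$, $Q=P_\C$, $I=I_\B$, and $I'=I_\C$ then yields $F(\SC^k(P_\B,I_\B))\subseteq \SC^k(P_\C,I_\C)$ for all $k\ge 1$, and since $F$ fixes the $x$-coordinates, projection onto $x$ closes the argument.

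All the substantive work is done upstream, in Lemma~\ref{lem:fexists} (where unimodularity of $B^i$ is precisely what makes the change-of-basis matrix $WV^{-1}$ integral) and in Theorem~\ref{thm:trans} (the black box carrying split closures through integral, affine maps). The only remaining task is to assemble the coordinate-wise maps into one global integral, affine map that fixes $x$ and lands inside $P_\C$, which is routine bookkeeping; I do not foresee a serious technical obstacle beyond that.
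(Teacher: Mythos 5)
Your proposal is correct and takes essentially the same route as the paper's proof: it assembles the coordinate-wise integral affine maps supplied by Lemma~\ref{lem:fexists} into a single map $F$ that fixes $x$ and sends $P_\B$ into $P_\C$, applies Theorem~\ref{thm:trans}, and projects onto the $x$-variables. Your remark about permuting coordinates so that the integer variables come first, in order to match the stated form of Theorem~\ref{thm:trans}, addresses a bookkeeping point that the paper leaves implicit.
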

\begin{proof}
   Lemma \ref{lem:fexists} implies that for each $i=1, \ldots, l$, there exists an integral affine transformation $f_i$ such that the transformation $(x,z)  \rightarrow (x,f_i(z))$ is integral, affine, and maps $B^i$ into $C^i$.
  Therefore, if $( x,  z_1, \ldots,  z_l) \in P_\B$, then $( x_i,  z_i) \in B^i$ and $( x_i, f_i( z_i)) \in C^i$.
    Let $g$ be the integral affine function from the space of $P_\B$ to the space of $P_\C$ defined as follows:
    \[ (x, z_1, \ldots, z_l) \in P_\B \Rightarrow g(x, z_1, \ldots, z_l) = (x, f_1(z_1), \ldots, f_l(z_l)). \]
    Then $g(P_\B) \subseteq P_\C$ and 
    Theorem~\ref{thm:trans} implies that $g(\SC^k(P_\B, I_\B)) \subseteq \SC^k(P_\C, I_\C)$ for all $k \geq 1$.

    Let $\bar x \in \projk{x}{\SC^k(P_\B, I_\B)}$ for some $k \geq 1$. By definition, there exists vectors $\bar z_1, \ldots, \bar z_l$
    such that $(\bar x, \bar z_1, \ldots, \bar z_l) \in \SC^k(P_\B, I_\B)$.
    Therefore
    \[ g(\bar x, \bar z_1, \ldots, \bar z_l) = (\bar x, f_1(\bar z_1), \ldots, f_l(\bar z_l)) \in \SC^k(P_\C, I_\C). \]
    This implies that $\bar x \in \projk{x}{\SC^k(P_\C, I_\C)}$, and the proof is complete.\qed
\end{proof}

The following is a consequence of Theorem~\ref{thm:main}.
\begin{corollary}\label{cor:uni}
	Let $P$ be defined as in (\ref{eq:p}), and let $I = \{1, \ldots, l\}$. If $\B$ and $\C$ are two binarization schemes defined by unimodular binarization polytopes, then $$\proj_x(\SC(P_\B, I_\B)) = \proj_x(\SC(P_\C, I_\C)).$$  
\end{corollary}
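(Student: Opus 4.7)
The corollary is a direct consequence of Theorem~\ref{thm:main}, which asserts the one-sided inclusion $\proj_x(\SC^k(P_\B, I_\B)) \subseteq \proj_x(\SC^k(P_\C, I_\C))$ whenever $\B$ is a unimodular binarization scheme and $\C$ is any binarization scheme. The plan is simply to apply this theorem twice, exchanging the roles of $\B$ and $\C$, and specialize to $k=1$.

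More concretely, I would first invoke Theorem~\ref{thm:main} with $\B$ playing the role of the unimodular scheme (which is justified by the hypothesis that $\B$ consists of unimodular binarization polytopes) and $\C$ playing the role of the arbitrary scheme; taking $k=1$ yields
\[
\proj_x(\SC(P_\B, I_\B)) \;\subseteq\; \proj_x(\SC(P_\C, I_\C)).
\]
Next, I would invoke Theorem~\ref{thm:main} a second time with the roles of $\B$ and $\C$ interchanged. This application is legitimate because the hypothesis of the theorem on the ``unimodular side'' only requires that each polytope in the scheme be unimodular, and by assumption $\C$ also satisfies this property; the scheme on the ``arbitrary side'' is unrestricted, so $\B$ is allowed there. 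Taking $k=1$ again yields the reverse inclusion
\[
\proj_x(\SC(P_\C, I_\C)) \;\subseteq\; \proj_x(\SC(P_\B, I_\B)).
\]
Combining the two inclusions gives the claimed equality.

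There is no real obstacle here: all of the technical work, namely the construction of the integral affine transformation between two unimodular binarization polytopes via Lemma~\ref{lem:fexists} and the invariance of the split closure under integral affine maps via Theorem~\ref{thm:trans}, is already encapsulated inside Theorem~\ref{thm:main}. The only thing to verify is the symmetry of the hypothesis, which is immediate from the definition of a unimodular binarization scheme. Hence the proof is a one-line deduction consisting of two applications of Theorem~\ref{thm:main}.
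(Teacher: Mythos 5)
Your proof is correct and matches the paper's intent exactly: the paper states Corollary~\ref{cor:uni} as an immediate consequence of Theorem~\ref{thm:main}, and the intended argument is precisely the double application of that theorem with the roles of $\B$ and $\C$ interchanged to obtain both inclusions. No gaps.
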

In particular we conclude that full and unary binarization schemes are stronger than all other binarization schemes in the sense that the projection of their split closures are equal to each other and are contained in all other projected split closures. 
Moreover, the proof of Theorem \ref{thm:main} implies that the unimodular transformation that maps one unimodular binarization scheme to another also maps its  split closure (in the extended space) to the split closure of the other.


As an other application of Theorem~\ref{thm:main}, let $\B$ be a  binarization scheme defined by unimodular binarization polytope and  let $\C $ be the logarithmic binarization scheme. Therefore we have  $\projk{x}{\SC^k(P_\B, I_\B)} \subseteq \projk{x}{\SC^k(P_\C, I_\C)}$ for all $k \geq 1$.
Furthermore, as $P_\C^{I_\C}$ is defined by $q= \sum_{i=1}^l \ceil{\log_2(u_i+1)}$ binary variables, all vertices of $\SC^q(P_\C, I_\C)$ have integral $z$ values by a result of Balas \cite{B79} on disjunctive cuts.
Therefore they also have integral coordinates for the variables $x_1, \ldots, x_l$. 
Consequently, $\projk{x}{\SC^q(P_\B, I_\B)} = \conv(P^I)$ and we have the following observation.

\begin{corollary}
	Let $P$, $I$ and $\B$ be defined as in Theorem~\ref{thm:main}. 
	Then $\projk{x}{\SC^q(P_\B, I_\B)} = \conv(P^I)$ where $q = \sum_{i=1}^l \ceil{\log_2(u_i+1)}$.
\end{corollary}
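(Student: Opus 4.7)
The plan is to combine Theorem~\ref{thm:main} with a classical result of Balas on disjunctive cuts for 0-1 programs, exactly along the lines sketched in the paragraph immediately before the corollary. I would take $\C=(\BLG(u_1),\ldots,\BLG(u_l))$ to be the logarithmic binarization scheme. Since the $B^i$ in $\B$ are unimodular, Theorem~\ref{thm:main} gives
\[
\projk{x}{\SC^q(P_\B, I_\B)} \subseteq \projk{x}{\SC^q(P_\C, I_\C)},
\]
so it suffices to show that the right-hand side equals $\conv(P^I)$.

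Next, I would observe that $P_\C$ has exactly $q = \sum_{i=1}^l \ceil{\log_2(u_i+1)}$ binary variables, all of which are declared integral in $I_\C$. Balas's theorem \cite{B79} on disjunctive cuts for 0-1 programs states that the $q$-th closure obtained from the 0-1 disjunctions $z_{ij}\le 0 \vee z_{ij}\ge 1$ already equals the convex hull of the 0-1 feasible points. Since these 0-1 disjunctions are a special case of the split sets in $\SS(I_\C)$, we get
\[
\SC^q(P_\C, I_\C) \subseteq \conv\bigl(P_\C \cap (\R^n \times \{0,1\}^q)\bigr).
\]
In the logarithmic scheme each $x_i$ is a linear function of the corresponding $z_i$ with integer coefficients, so any $(x,z)\in P_\C$ with $z\in\{0,1\}^q$ automatically satisfies $x_i\in\Z$ for $i\in I$. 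Consequently $P_\C \cap (\R^n \times \{0,1\}^q) = P_\C^{I_\C}$, and because $\projk{x}{P_\C^{I_\C}} = P^I$ (every $x\in P^I$ lifts to a 0-1 $z$ via the binarization property and, conversely, any lift gives $x\in P^I$), we conclude
\[
\projk{x}{\SC^q(P_\C, I_\C)} \subseteq \conv(P^I).
\]

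For the reverse inclusion, pick any $x^\ast \in P^I$. For each $i \in I$ the integer $x_i^\ast\in\{0,\ldots,u_i\}$ can be lifted to some $z_i^\ast\in\{0,1\}^{u_i}$ with $(x_i^\ast, z_i^\ast)\in B^i$, by the defining property of a binarization polytope. The point $(x^\ast, z_1^\ast,\ldots,z_l^\ast)$ lies in $P_\B$ and has all integer coordinates required by $I_\B$, hence it belongs to every split closure $\SC^k(P_\B, I_\B)$. Projecting and taking convex hulls gives $\conv(P^I) \subseteq \projk{x}{\SC^q(P_\B, I_\B)}$, completing the argument.

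The only step requiring any care is the invocation of Balas's result: one must observe that taking split closures with respect to the larger index set $I_\C$ only strengthens the 0-1 closure on the $z$ variables, so the bound of $q$ iterations still suffices. Everything else is bookkeeping of the two containments.
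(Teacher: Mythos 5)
Your proposal is correct and follows essentially the same route as the paper: apply Theorem~\ref{thm:main} with $\C$ the logarithmic binarization scheme, invoke Balas's sequential convexification result on the $q$ binary variables of $P_\C$ to get integrality of the $z$ (hence the $x$) coordinates after $q$ rounds, and note the trivial reverse inclusion. Your version merely spells out the details (that elementary 0-1 disjunctions are special split sets, and that integer $z$ forces integer $x$ in the logarithmic scheme) that the paper leaves implicit.
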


\subsection{Perfect logarithmic binarization is better than logarithmic  binarization}\label{sec-nonperf}

We next give an example for which the projection of the split closure of the perfect logarithmic extended formulation is strictly contained in the projection of the split closure of the logarithmic formulation.
Consider
\[P=\{x\in[0,2]^2\::\:\textstyle x_1+10x_2\le20,~ 10x_1+x_2\le 20\},\]
and the associated integer set $P^I\subset\Z^2$ where $I=\{1,2\}$. 
Now consider the extended formulation of $P$ obtained by using the logarithmic binarization scheme:
\[\PLG=\{(x,z)\in\R^{2}\times[0,1]^4\::\:x\in P,~x_i = z_{i1} + 2z_{i2},~\text{ for }i=1,2\},\]
and the extended formulation of $P$ obtained by using the perfect logarithmic binarization scheme:
\[\PLGp=\{(x,z)\in\R^{2}\times[0,1]^4\::\:x\in P,~x_i = z_{i1} + 2z_{i2},~z_{i1} + z_{i2} \leq 1,~\text{ for }i=1,2\}.\]

\begin{theorem}\label{thm:LG}
	For $P$ defined above, we have $\projk{x}{\SC(\PLGp)} \subsetneq \projk{x}{\SC(\PLG)}$.
\end{theorem}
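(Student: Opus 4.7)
The inclusion $\projk{x}{\SC(\PLGp)} \subseteq \projk{x}{\SC(\PLG)}$ is immediate: Proposition~\ref{uniqueness} gives $\PLGp \subseteq \PLG$, so \eqref{containment} yields $\SC(\PLGp) \subseteq \SC(\PLG)$, and projecting onto the $x$-space preserves the inclusion. The heart of the theorem is therefore the strictness.

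Observe first that $\BLGp(2)$ is a perfect unimodular binarization polytope: its three integer points are $(0,(0,0))$, $(1,(1,0))$, $(2,(0,1))$, so the columns $v^1-v^0$ and $v^2-v^0$ of its transition matrix are $e_1$ and $e_2$, which form the identity. Hence by Corollary~\ref{cor:uni}, $\projk{x}{\SC(\PLGp)} = \projk{x}{\SC(P_\C)}$ for any binarization scheme $\C$ built from unimodular binarization polytopes, so the derivation of a separating cut for $\SC(\PLGp)$ may be carried out in whichever unimodular extended formulation is most convenient (e.g., the full or unary binarization of $P$, in which the perfection constraint is built into the polytope).

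Following the style of the proof in Section~\ref{sec-log}, I plan to derive an explicit inequality $c^Tx \leq d$ valid for $\SC(\PLGp)$ as a positive combination of three families of valid inequalities: (i) Chv\'atal--Gomory cuts on $\PLGp$ exploiting the perfection constraints $z_{i1}+z_{i2}\leq 1$ (equivalently $x_i + z_{i1}\leq 2$), which are unavailable in $\PLG$; (ii) split cuts from single-variable disjunctions $z_{ij}\leq 0 \vee z_{ij}\geq 1$, which behave identically in $\PLGp$ and $\PLG$; and (iii) split cuts from disjunctions coupling binarization variables across different blocks (such as $z_{12}+z_{22}$ or $z_{12}+z_{21}$), where the projections onto $x$-space differ between $\PLGp$ and $\PLG$ precisely because the perfection constraint tightens the feasible $z$-region. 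To show the resulting inequality is not valid for $\SC(\PLG)$, I would exhibit a point $\bar x \in P$ with $c^T\bar x > d$ together with an explicit lift $\bar z$ in $\PLG$ having $\bar z_{i1}+\bar z_{i2} > 1$ for some $i$ (so that $(\bar x,\bar z)\notin \PLGp$) such that $(\bar x,\bar z)\in \conv(\PLG\setminus S)$ for every split set $S$; the extra freedom of allowing $z_{i1}+z_{i2}>1$ in $\PLG$ is exactly what furnishes the needed convex combinations, and this freedom is absent in $\PLGp$.

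The main obstacle is identifying the cut $c^Tx \leq d$ and the companion point $\bar x$: many natural single-split disjunctions yield projected convex hulls that already coincide on $\PLGp$ and $\PLG$, since the perfection constraint often does not alter the projection onto the original $x$-space. The separation must therefore arise either from a carefully chosen combination of cuts in the spirit of (\ref{gc1})--(\ref{s4}), or from a disjunction where the perfection constraint visibly restricts all $z$-lifts of $\bar x$ in $\PLGp$ while some $z$-lift of $\bar x$ in $\PLG$ simultaneously survives every split. The symmetry of $P$ under $x_1\leftrightarrow x_2$ suggests targeting a symmetric inequality.
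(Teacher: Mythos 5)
Your setup is sound: the containment $\projk{x}{\SC(\PLGp)} \subseteq \projk{x}{\SC(\PLG)}$ does follow from $\PLGp \subseteq \PLG$ and \eqref{containment}, and your observation that $\BLGp(2)$ is unimodular (so Corollary~\ref{cor:uni} lets you work in any unimodular scheme) is correct and a legitimate simplification the paper does not use. But what you have written is a strategy, not a proof. The two items that constitute essentially all of the work are left as open tasks: (a) exhibiting a concrete inequality valid for $\SC(\PLGp)$ together with a point of $P$ violating it, and (b) proving that this point lies in $\projk{x}{\SC(\PLG)}$, which requires showing that some fixed lift $(\bar x,\bar z)\in\PLG$ belongs to $\conv(\PLG\setminus S)$ for \emph{every} split set $S$ in the extended space. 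Part (b) is the technically heavy step: in the paper it occupies the Appendix and proceeds by normalizing the split direction $c$ (eliminating the $x$-coefficients via the equations $x_i=z_{i1}+2z_{i2}$), then constructing pairs of points $\bar p\pm d$ in $\PLG$ that force successive bounds on the entries of $c$ until a contradiction with Proposition~\ref{prop:individual-split} is reached. You acknowledge this step is needed but do not carry it out, so the argument is incomplete.

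There is also a substantive misstatement in your taxonomy of cuts. You assert that split cuts from single-variable disjunctions $z_{ij}\le 0 \vee z_{ij}\ge 1$ ``behave identically in $\PLGp$ and $\PLG$'' and therefore look for the separation in cross-block disjunctions. This is false, and it points you away from where the separation actually lives: $\conv(\PLGp\setminus S)$ can be strictly tighter than $\conv(\PLG\setminus S)$ for such $S$ precisely because the perfection constraints $z_{i1}+z_{i2}\le 1$ are present in the LP relaxation $\PLGp$ on each side of the disjunction. Indeed, the paper's separating cuts \eqref{lgc1} and \eqref{lgc2} come from exactly these single-variable disjunctions (on $z_{22}$ and $z_{12}$ respectively), each derivation invoking $z_{i1}+z_{i2}\le 1$ on one branch; their sum projects to $x_1+x_2\le 2$, which cuts off $\bar x=(6/5,6/5)$. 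So the mechanism is not a more exotic coupled disjunction but the interaction of the simplest disjunctions with the tighter relaxation. To complete the proof you would need to commit to a specific $P$, derive such cuts explicitly, and then do the split-survival analysis for a concrete fractional lift in $\PLG$ (the paper uses the all-$2/5$ lift of $(6/5,6/5)$).
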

\begin{proof}
We will show that the point $\bar x=(6/5, 6/5)$ belongs to $\projk{x}{\SC(\PLG)}$ but not to  $\projk{x}{\SC(\PLGp)}$.
We will first argue that the following inequalities are split cuts for $\PLGp$:
\begin{eqnarray} 
   z_{11} + z_{12}~~~~~~~~ + z_{22} &\leq& 1 \label{lgc1}\\
   \phantom{ z_{11} +} z_{12} + z_{21} + z_{22} &\leq& 1 \label{lgc2}
\end{eqnarray}
To see \eqref{lgc1} is a split cut, consider the disjunction $z_{22}\le0$ or $z_{22}\ge1$.
When $z_{22}\le0$, \eqref{lgc1} holds as $z_{11} + z_{12}\le1$ is valid for $\PLGp$. 
On the other hand, if $z_{22}\ge1$, then $x_2=2$ and $x_1=0$. Consequently, $z_{11} + z_{12}=0$ and the inequality 
\eqref{lgc1}  holds.
The argument for \eqref{lgc2} is similar using the disjunction $z_{12}\le0$ or $z_{12}\ge1$.

Adding inequalities  \eqref{lgc1}  and  \eqref{lgc2}, we get
$ z_{11} + 2z_{12} + z_{21} + 2z_{22} \leq 2, $
which is the same as $x_1 + x_2 \leq 2$ and therefore  $\bar x\not\in\projk{x}{\SC(\PLGp)}$. 

The proof of the fact that $\bar x=(6/5, 6/5)\in\projk{x}{\SC(\PLG)}$ is in the Appendix.\qed
\end{proof}

\subsection{Unary binarization is better than perfect logarithmic binarization}\label{sec-uni-log}

We next give an example for which the projection of the split closure of the unary extended formulation is strictly contained in the projection of the split closure of the perfect logarithmic extended formulation.
Consider
\beq P = \Big\{(x,y)\in[0,3]^3\times[0,1]^3\::\:\textstyle\sum_{i=1}^3x_i=4,~x_i\le4y_i,~\text{ for }i=1,2,3\Big\}, \label{ex-lu}\eeq
and the associated integer set $P^I\subset\Z^6$ where $I=\{1,2,3,4,5,6\}$. 
Now consider the unary extended formulation of $P$:
\begin{eqnarray*}
  P_{U} = \Big\{(x,y,z)\in\R^{3+3+3\times 3}\::\:(x,y)\in P, && x_i = z_{i1} + 2z_{i2} + 3z_{i3},\\
  && z_{i1} + z_{i2} + z_{i3} \leq 1~\text{ for }i=1,2,3\Big\}, \label{ex-lu-pu}
\end{eqnarray*}
and the perfect logarithmic extended formulation of $P$:
\beq \PLGp = \Big\{(x,y,z)\in\R^{3+3+3 \times 2}\::\:(x,y)\in P,~x_i = z_{i1} + 2z_{i2},~\text{ for }i=1,2,3\Big\}. \label{ex-lu-pl}\eeq
Note that  the logarithmic binarization polytope  $\BLG(u)$ is  perfect for $u=3$.

\begin{theorem}\label{thm:U}
	For  $P$ defined above, $\projk{x,y}{\SC(P_{U})} \subsetneq \projk{x,y}{\SC(\PLGp)}$.
\end{theorem}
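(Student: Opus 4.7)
My plan is to exhibit a point $(\bar x, \bar y) \in P$ lying in $\proj_{x,y}(\SC(\PLGp))$ but not in $\proj_{x,y}(\SC(P_{U}))$. Motivated by the triple symmetry of $P$ under permutations of coordinates, I would take
\[ \bar x = (4/3,\,4/3,\,4/3), \quad \bar y = (4/9,\,4/9,\,4/9), \]
which satisfies $\sum_i \bar x_i = 4$ and $\bar x_i = 3\bar y_i \le 4\bar y_i$, so $(\bar x, \bar y) \in P$. The separating inequality will be $y_1+y_2+y_3 \ge 2$: it is valid for $P^I$ since every integer-feasible solution has at most one $x_i$ equal to $0$, yet it is violated by $\bar y$, whose coordinates sum to $4/3$.

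To show $(\bar x, \bar y) \notin \proj_{x,y}(\SC(P_{U}))$, I would derive $y_1+y_2+y_3 \ge 2$ as valid for $\SC(P_{U})$ using two families of split cuts. First, for each $i$ the disjunction $y_i \le 0$ vs $y_i \ge 1$ yields the cut $z_{i1}+z_{i2}+z_{i3} \le y_i$: in the slice $y_i=0$ the bound $x_i \le 4y_i$ forces $x_i=0$ and hence $z_{i1}=z_{i2}=z_{i3}=0$, while in the slice $y_i=1$ the defining inequality $z_{i1}+z_{i2}+z_{i3}\le 1$ reads $\le y_i$. Second, the disjunction $\sum_{i,j} z_{ij} \le 1$ vs $\sum_{i,j} z_{ij} \ge 2$ has an empty first slice, because $4 = \sum_i x_i = \sum_{i,j} j\, z_{ij} \le 3\sum_{i,j}z_{ij}$ forces $\sum_{i,j}z_{ij} \ge 4/3$, so $\sum_{i,j}z_{ij}\ge 2$ is a valid split cut. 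Summing the first cut over $i$ and combining with the second gives $\sum_i y_i \ge \sum_{i,j} z_{ij} \ge 2$, which $(\bar x,\bar y)$ violates.

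For $(\bar x,\bar y)\in\proj_{x,y}(\SC(\PLGp))$, I would exhibit the symmetric lifting $\bar z_{i1}=\bar z_{i2}=4/9$ for $i=1,2,3$, so that $\bar z_{i1}+2\bar z_{i2}=4/3=\bar x_i$ and hence $(\bar x,\bar y,\bar z)\in\PLGp$. The crucial reason why the preceding argument does not transfer to $\PLGp$ is that the split on $y_i$ now only yields $z_{i1}+z_{i2} \le 2y_i$ instead of $\le y_i$, because in the slice $y_i=1$ the integer representation of $x_i=3$ has both $z_{i1}=z_{i2}=1$. One can check that the single-index cuts of $\SC(\PLGp)$ in fact force the lifting to be exactly $\bar z_{i1}=\bar z_{i2}=4/9$, so this is the only candidate lifting to test.

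The main obstacle is verifying that this candidate $(\bar x,\bar y,\bar z)$ actually lies in $\SC(\PLGp)$, i.e., that every integer split disjunction in the $(x,y,z)$-space admits a convex decomposition of the point into feasible slice points. I would classify the split disjunctions according to the support of the normal vector (single-coordinate splits on some $z_{ij}$, $y_i$, or $x_i$; single-index combinations; and cross-index combinations), use the full triple symmetry under permutation of $i$ to reduce to representative cases, and in each case present an explicit two-point decomposition into feasible points of $\PLGp$ lying on opposite sides of the disjunction, exploiting the freedom to adjust $z_i$ independently given $x_i$. Mirroring the organization of Theorem~\ref{thm:LG}, I would present the separating inequality for $P_{U}$ in the main body and defer the split-by-split verification for $\PLGp$ to an appendix.
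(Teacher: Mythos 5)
Your first half is correct and in fact cleaner than the paper's. The separating inequality is the same one the paper uses, $y_1+y_2+y_3\ge 2$, but you obtain it from two genuine split disjunctions (the cut $z_{i1}+z_{i2}+z_{i3}\le y_i$ from $y_i\le 0 \vee y_i\ge 1$, noting that $y_i=0$ forces $x_i=0$ and hence $z_{ij}=0$, together with the cut $\sum_{i,j}z_{ij}\ge 2$ whose first slice is empty because $4=\sum_{i,j}jz_{ij}\le 3\sum_{i,j}z_{ij}$), whereas the paper derives three Gomory--Chv\'atal cuts for $P_{U}$ and adds them. Your witness point $\bar x=(4/3,4/3,4/3)$, $\bar y=(4/9,4/9,4/9)$ differs from the paper's $(\bar x,\bar y)=((1.5,1,1.5),(.5,.5,.5))$, and both violate $\sum_i y_i\ge 2$. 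I also checked your side claim: the split $y_i\le 0\vee y_i\ge 1$ applied to $\PLGp$ does force any surviving lifting of your point to have $z_{i1}=z_{i2}=4/9$ (the $y_i=1$ piece must carry $x_i=3$, whose only representation is $z_{i1}=z_{i2}=1$), so your candidate lifting is at least not immediately killed.

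The genuine gap is the second half: membership of $(\bar x,\bar y,\bar z)$ in $\SC(\PLGp)$ is asserted as a plan, not proved, and it is the bulk of the work --- the paper devotes its entire appendix argument for this theorem to the analogous verification. Worse, your choice of point makes that verification materially harder than the template you propose to mirror. The paper deliberately picks a half-integral point so that for any integral split $d<b^Ty+c\cdot z<d+1$ containing the point, the value sits at exactly $d+\tfrac12$; then for each symmetric pair $\bar p=\tfrac12 p'+\tfrac12 p''$ with integral perturbation pattern, ``at least one of $p',p''$ lies in $S$'' translates into the clean condition $|c\cdot(z'-\bar z)|<\tfrac12\cdot 2$, i.e.\ a coefficient bound of the form $|\cdot|\le 1$. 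Your point is $\tfrac19$-integral, so the offset $\delta=b^T\bar y+c\cdot\bar z-d$ is only known to lie in $\{1/9,\dots,8/9\}$, and a perturbation of magnitude $4/9\,|c|$ need not exit the unit-width slab on both sides (e.g.\ $\delta=1/9$ and a shift of $+4/9$ stays inside). So the symmetric-pair bookkeeping does not close the same way, and since the lifting is unique, a single surviving bad split would sink the whole argument. Either carry out the full split-by-split verification for your point (accepting the messier arithmetic), or move to a half-integral witness such as the paper's, for which the two-point perturbation technique works as described.
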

\begin{proof}
Let $(\bar x,\bar y) = [(1.5,1,1.5),(.5,.5,.5)]$.
We will argue that the point $(\bar x,\bar y) $ belongs to $\projk{x,y}{SC(\PLGp)}$ but not to  $\projk{x,y}{\SC(P_{U})}$.
First we will show  that the following inequalities are Gomory-Chv\'atal cuts for $P_{U}$:
  \begin{eqnarray}
  y_1 - z_{11} - z_{12} - z_{13} &\geq& 0,  \label{ugc1}\\
  y_2 - z_{21} - z_{22} - z_{23}  &\geq& 0,  \label{ugc2}\\
  y_3 + z_{11} + z_{12} + z_{13} + z_{21} + z_{22} + z_{23} &\geq& 2. \label{ugc3}
  \end{eqnarray}
 To derive inequality (\ref{ugc1}), we take the combination of constraints
 \[ (4y_1 - x_1 \geq 0) + (x_1 - z_{11} - 2 z_{12} - 3 z_{13} = 0) - 3(z_{11} + z_{12} + z_{13} \leq  1) + \]
 \[  (z_{12} \geq 0) + 2(z_{12} \geq 0) \]
to obtain $4y_1 -4z_{11} - 4z_{12} - 4z_{13} \geq -3$ as a valid inequality for $P_{U}$.
Dividing this inequality by 4 and rounding up the resulting right-hand-side,
we obtain (\ref{ugc1}) as a Gomory-Chv\'atal cut for $P_{U}$.
We can obtain (\ref{ugc2}) in a similar manner by taking constraints involving $y_2, x_2, z_{21}, z_{22}, z_{23}$.
Taking the combination of constraints
\[-\frac{1}{12}(x_3 \leq 3) +\frac13(x_1 + x_2 + x_3 = 4) + \frac14(4y_3 - x_3 \geq 0) + \]
\[\frac13(-x_1 + z_{11} + 2 z_{12} + 3 z_{13} = 0) + \frac13(-x_2 + z_{21} + 2 z_{22} + 3 z_{23} = 0). \]
and rounding up the nonzero coeffients of the variables and rounding up the right-hand-side,
we obtain (\ref{ugc3}) as a Gomory-Chv\'atal cut for $P_{U}$.

Adding Inequalities \eqref{ugc1}-\eqref{ugc3}, we obtain $y_1+y_2+y_3\ge2$ is a valid inequality for $\SC(P_{U})$ which is violated by $(\bar x,\bar y)$.
The proof of the fact that  $(\bar x,\bar y) \in\projk{x,y}{SC(\PLGp)}$ is in the Appendix.\qed
\end{proof}

\newcommand{\bb}{B\&B~}
\newcommand{\bbt}{\bb tree}
\newcommand{\TTp}{\TT'}

\section{Branching}\label{sec:branch}
We  next consider binarization in the context of branch and bound (B\&B) trees for  integer programs.   
We will construct a polyhedral set such that the description of its integer hull can be obtained with a much smaller tree when a binary extended formulation is used instead of the original formulation.

%
%

\newcommand{\leaf}{\mathrm{leaf}}
To simplify notation, we will consider a pure-integer set $P^I=P\cap \Z^n$ where $P\subseteq\R^n$ is a polyhedron and $I=\{1,\ldots,n\}$.
A \bb tree for $P^I$ is a rooted binary tree where each node  has either zero or 2 successor nodes.
Nodes in the tree without successor nodes are called leaf nodes and the only node without a predecessor is called the root node. 
For bounded $P \subseteq [0,u]^n$, we label the root node with $D = [0,u]^n$, and similarly, for a binary extended formulation $P_\B \subseteq [0,u]^n \times [0,1]^q$, we  label the root node with a subset of $[0,u]^n \times [0,1]^q$. 
The labels of the non-root nodes  are are obtained from their parent node via ``branching".
More precisely, if a node is labeled with a polyhedron $D'$, its successor nodes are labeled with $D'\cap L$ and $D'\cap R$ where $L=\{y\in\R^n:y_i\le t\}$ and  $R=\{y\in\R^n:y_i\ge t+1\}$ for some  variable $y_i$ where $i\in I$, and $t\in\Z$. 
We refer to $D'\cap L$ as the left successor of $D'$ and  $D'\cap R$ as the right successor.

Let $\TT$ be a \bbt~ for $P^I$, and let $\leaf(\TT)$ denote the labels of the leaf nodes of $\TT$.
From now on we will refer to a node by its label.
Note that 
$$P^I\seq \bigcup_{N \in \leaf(\TT)} N \cap P\seq  P, ~~\text{and}~~\conv (P^I)\seq \conv\Big(\bigcup_{N \in \leaf(\TT)} N \cap P\Big)\seq  P.$$
We will call $\TT$ a \emph{complete} \bbt~with respect to $P^I$ if optimizing any linear function over $P^I$ is the same as optimizing it over $P$ intersected with the leaf nodes of $\TT$. 
In other words, $\TT$ is called complete if
\begin{equation}
\label{eq:complete}
\conv (P^I)= \conv\Big(\bigcup_{N \in \leaf(\TT)} N \cap P\Big).
\end{equation}
 
In an earlier paper, Owen and Mehrotra~\cite{OM01} studied the binary extended formulation $P_\mathcal{B}$  using the full binarization scheme as defined in equations \eqref{bin-sa}.  
They argue that given a \bb tree $\TT_\B$ for $P_\mathcal B^{I_\mathcal B}$, one can construct a \bb tree $\TT$ for $P^I$ with the same number of leaves such that 
\begin{equation}
\label{eq:complete-extended}
\bigcup_{N \in \leaf(\TT)} (N \cap P) \subseteq \bigcup_{N \in \leaf(\TT_\B)} \proj_x(N \cap P_\mathcal{B}).
\end{equation}
They also prove a similar result for the  logarithmic binarization scheme   \eqref{bin-log}.
Thus, it seems that there is no benefit in branching on the auxiliary binary variables and they conclude  that {\em ``remodeling of mixed-integer programs by binary variables should be avoided in practice unless special techniques are used to handle these variables.''}

We also point out that equation \eqref{eq:complete-extended} holds for the unary binarization scheme as well.
To see this, first note that for the unary binarization scheme
$$ (z_{it}\le 0) ~\Longrightarrow~ (x_i\le t-1)\text{~~and~~}  (z_{it}\ge 1) ~\Longrightarrow~  (x_i\ge t)$$
and therefore any \bb tree $\TT_\B$ can be constructed by branching only on the auxiliary variables.
Consequently,  any leaf node $N$  of the \bb tree has the form
$$N=\{z_{it}=0,~\forall (i,t)\in S_0,~\text{and},~ z_{it}=1,~\forall (i,t)\in S_1\}$$
for some  index sets $S_0$ and $S_1$.
Now consider a \bb tree $\TT$ for $P^I$ constructed from $\TT_\B$ as follows:
if two node in $\TT_\B$ are created from their common predecessor by adding the conditions $(z_{it}=0)$ and $(z_{it}=1)$, then  we create two nodes in $\TT$ by adding the conditions $ (x_i\le t-1)$ and $(x_i\ge t)$, respectively.
Note that for every leaf node $N$ of $\TT_\B$, there is a corresponding leaf node  $N'$ of $\TT$:
$$N'=\{x_i\le t-1,~\forall (i,t)\in S_0,~\text{and},~ x_i\ge t,~\forall (i,t)\in S_1\}=\{a_i\le x_i\le b_i,~\forall i\in I\}$$
for some integer vectors $a$ and $b$.

Given a point $\bar x\in P\cap N'$, we construct a point $(\bar x,\bar z)\in P_\B$ where for all $i\in I$
$$\bar z_{it}=\left\{ 
\begin{array}{cl} 1 & 1\le t\le a_i\\[.1cm] ({\bar x_i-a_i})/({b_i-a_i})& a_i< t\le b_i\\[.1cm]0 &  b_i<t\le u_i.\end{array}   \right.$$
It is easy to see that the  point $(\bar x,\bar z)\in N$ and therefore $\proj_x(N \cap P_\mathcal{B})\supset (N \cap P)$.
Consequently, branching on the auxiliary binary variables associated with the unary binarization scheme does not seem useful.

Now consider an alternative binarization defined by the binarization polytope
\begin{eqnarray}
  \BALT(u) = \{(x,z)\in\R\times[0,1]^{u}\::&&\:\textstyle x=u z_u + \sum_{j=1}^{u-1}  z_j,\\ \nonumber
  &&0\le z_{u-1}\le \ldots \le z_{1} \le 1, z_1 + z_u \leq 1\}, \label{bin-alt}
\end{eqnarray}
and  the polyhedron 
$$
P^n = \Big\{ x \in [0,4]^n : \sum_{i \in S} x_i  + \sum_{i \notin S} (4- x_i) \geq \frac{1}{2},~~ \forall S \subseteq \{1,\ldots,n\}\Big\}
$$
obtained by cutting all of the corners of the hypercube $[0,4]^n$. Clearly
\begin{equation}
P^n\cap \Z^n = \{0,1, 2,3,4\}^n \setminus \{0,4\}^n.
\end{equation}

\begin{proposition}\label{prop:bbt1}
For the binarization $\B= (\BALT(4), \dots, \BALT(4))$, there exists a complete \bbt~$\TT_\B$ with respect to the binary extended formulation $P^n_\B$ with size $2^n + n$.
\end{proposition}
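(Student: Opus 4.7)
The plan is to exhibit an explicit \bb tree $\TT_\B$ with $n+2^n$ leaves and verify condition \eqref{eq:complete}. The tree has a ``spine'' of length $n$ followed by a complete binary subtree of depth $n$ at its foot. Starting at the root, branch on $z_{11}$: the right child $L_1$, with added constraint $z_{11}=1$, is a leaf, while the left child $\{z_{11}=0\}$ continues the spine by branching on $z_{21}$. Iterating, for each $k=1,\dots,n$ the node $\{z_{i1}=0\text{ for }i<k\}$ branches on $z_{k1}$, producing the spine leaf $L_k = \{z_{i1}=0\text{ for }i<k,\; z_{k1}=1\}$ and a left child that continues the spine. After $n$ spine branchings we reach $M = \{z_{i1}=0\text{ for all }i\}$, at which we branch successively on $z_{14},z_{24},\dots,z_{n4}$, producing a complete binary subtree of depth $n$ with $2^n$ bottom leaves. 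Altogether the tree has $n$ spine leaves and $2^n$ bottom leaves.

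For completeness I would verify (a)~$N\cap P^n_\B \subseteq \conv(P^n_\B \cap \Z^{I_\B})$ for every leaf $N$, and (b)~every integer point of $P^n_\B$ lies in some leaf. Claim (b) is immediate: any integer $(\bar x,\bar z)\in P^n_\B$ has $\bar x\in\{0,1,2,3,4\}^n\setminus\{0,4\}^n$, so the smallest index $k$ with $\bar x_k\in\{1,2,3\}$ satisfies $\bar z_{i1}=0$ for $i<k$ and $\bar z_{k1}=1$, placing $(\bar x,\bar z)\in L_k$. For (a), each of the $2^n$ bottom leaves has $z_{i1}=0$ and $z_{i4}\in\{0,1\}$ fixed for every~$i$, forcing $x_i=4z_{i4}\in\{0,4\}$; such an $x$ is a corner of $[0,4]^n$ excluded from $P^n$, so the leaf intersects $P^n_\B$ in the empty set. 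At a spine leaf $L_k$, the constraints $z_{k1}=1$, $z_{k1}+z_{k4}\le 1$, and $z_{k3}\le z_{k2}\le z_{k1}$ force $z_{k4}=0$ and $x_k = 1+z_{k2}+z_{k3}\in[1,3]$; hence in every corner-cut inequality $\sum_{i\in S}x_i + \sum_{i\notin S}(4-x_i)\ge 1/2$ the term involving $x_k$ or $4-x_k$ is already at least $1$, so the inequality is slack and all $P^n$ coupling constraints are redundant at $L_k$.

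With the coupling dropped, $L_k\cap P^n_\B$ factorizes as a Cartesian product $\prod_{i=1}^n B_i$, where $B_i = \BALT(4)\cap\{z_{i1}=0\}$ for $i<k$, $B_k = \BALT(4)\cap\{z_{k1}=1\}$, and $B_i=\BALT(4)$ for $i>k$. Since $\BALT(4)$ is a $4$-simplex whose five vertices are exactly its integer points $v_0,\dots,v_4$ (with $v_j$ the vertex at $x=j$), each $B_i$ equals the convex hull of a subset of these vertices, namely $\conv\{v_0,v_4\}$, $\conv\{v_1,v_2,v_3\}$, or $\conv\{v_0,\dots,v_4\}$ respectively. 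Using the standard fact that the product of convex hulls of finite sets equals the convex hull of their Cartesian product, $L_k\cap P^n_\B$ is the convex hull of a set of integer points of $P^n_\B$, each of which has $x_k\in\{1,2,3\}$ and therefore does not lie in $\{0,4\}^n$. This establishes (a) and, together with (b), yields completeness. The main technical point is the factorization of $L_k\cap P^n_\B$, whose validity rests on the slackness of all $P^n$ corner-cut inequalities whenever $x_k\in[1,3]$.
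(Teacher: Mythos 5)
Your proof is correct and builds exactly the same tree as the paper: a spine branching on $z_{11},\dots,z_{n1}$ whose right children are the $n$ spine leaves, followed by a complete depth-$n$ subtree on $z_{14},\dots,z_{n4}$ whose $2^n$ leaves meet $P^n_\B$ in the empty set. Your completeness check is in fact somewhat more careful than the paper's: you verify both inclusions of \eqref{eq:complete} and establish $L_k\cap P^n_\B\subseteq\conv(P^n_\B\cap\Z^{5n})$ directly in the extended space via the observation that $\BALT(4)$ is a simplex on its five integer points (so each factor of the decoupled product is a convex hull of integer vertices), whereas the paper projects to the $x$-space and asserts $\{x\in P^n: x_k\in[1,3]\}\subseteq\conv(P^n\cap\Z^n)$ without proof.
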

\begin{proof}
Consider the \bb tree $\TT_\B$ constructed as follows: 
We label the root node with 
$$L_0 = \{(x,z)\in[0,4]^n \times [0,1]^{n\times 4}\::\: (x_i,z_i)\in \BALT(4)\text{ for }i=1,\ldots,n\}$$ 
where $z_i$ denotes  the vector of auxiliary variables associated with $x_i$.
For $i=1, \dots, n$, node $L_{i-1}$ has two successor nodes  $L_i$ and $ R_i$ obtained by branching on variable $z_{i1}$ as follows: 
\beqa
L_i &=& \{ (x,z) \in L_{i-1} : z_{i1} = 0 \}, \ \ \ R_i = \{(x,z) \in L_{i-1} : z_{i1} = 1\}
\eeqa
Nodes $R_1,\ldots, R_n$ are leaf nodes of the tree.
Note that as $z_{i1}=1$ for $(x,z)\in R_i$,
\beqa R_i&=& \{(x,z)\in L_0\::\: z_{k1}=0 \text{ for } k<i, ~z_{i1}=1, ~x_i=1+z_{i2}+z_{i3}\}.
\eeqa
Consequently
$$\proj_x(P^n_\B\cap R_i ) =  \{x\in P^n\::\: x_k\in [0,4] \text{ for } k\not=i,~x_i\in[1,3]\}\subseteq \conv(P^n\cap \Z^n).$$
The rest of the tree consists of a complete binary tree of depth $n$ rooted at node $L_n$  obtained  by branching on $z_{i4}$ for all $i=1,\ldots,n$.
This leads to $2^n$ additional leaf nodes 
\beqa N_S &=& \{ (x,z)\in  L_n  :  z_{i4} = 1 \ \forall i \in S, z_{i4} = 0 \ \forall i \notin S\},\\
&\subseteq& \{ (x,z)\in  L_n  :  x_{i} = 4 \ \forall i \in S, x_{i} = 0 \ \forall i \notin S\}
\eeqa
one for each subset $S$ of $\{1,\ldots,n\}$, and notice that $P^n_\B\cap N_S=\emptyset$. 
The tree $\TT_\B$ has a total of $2^n + n$ leaf nodes, see Figure~\ref{fig:uno}.
Therefore, 
$$\conv\Big(\bigcup_{N \in\leaf( \TT_\B)} \proj_x(P^n_\B \cap N)\Big )  
= \conv\Big(\bigcup_{i=1}^n  \proj_x(P^n_\B\cap R_i )\Big) 
\subseteq \conv(P^n \cap \Z^n),$$
implying $\TT_\B$ is complete, see Figure~\ref{fig:dos}.  \qed
\end{proof}

\begin{figure}[tb]
	\begin{center}
		\usetikzlibrary{arrows}

\tikzset{
  treenode/.style = {align=center, inner sep=0pt, text centered,    font=\sffamily},
  bucket/.style = {treenode, rectangle, white, font=\sffamily\bfseries, draw=black,    fill=black, text width=1.5em, text height=1.1em},
  bucketB/.style = { rectangle, rounded corners,	draw, align=center, draw=black},
  decision/.style = {treenode, circle, black, draw=black,     text width=1.5em, very thick},
  decisionT/.style = {treenode, circle, white, draw=black,   fill=black,  text width=1.5em, very thick},
  topstart/.style = {treenode,   minimum width=0.5em, minimum height=1.5em}
}

\begin{tikzpicture}[scale=0.5,->,>=stealth'] 
 \tikzstyle{level 1}=[sibling distance=12cm]
 \tikzstyle{level 2}=[sibling distance=8cm]
 \tikzstyle{level 3}=[sibling distance=7cm]
 \tikzstyle{level 4}=[sibling distance=3cm]
\node [decision] {$L_0$}
    child{ node [decision] {$L_1$} 
    	child{ node [decision] {$L_2$} 
         	child{ node [bucketB] { }
			child{ node [bucketB] {$N_{\emptyset}$}
				edge from parent node[above left]  {$z_{24} = 0$} 
				}		
			child{ node [bucketB] {$N_{\{2\}}$}
				edge from parent node[above right]  {{ $z_{24} = 1$}} 
				}
			edge from parent node[above left]  {$z_{14} = 0$} 
			}
		child{ node [bucketB] {} 
			child{ node [bucketB] {$N_{\{1\}}$}
			edge from parent node[above left]  {\hspace{-0.2cm}{$z_{24} = 0$}} 
			}		
			child{ node [bucketB] {$N_{\{1,2\}}$}
			edge from parent node[above right]  {$z_{24} = 1$} 
			}
			edge from parent node[above right]  {$z_{14} = 1$} 
			} 
		 edge from parent node[above left]  {$z_{21} = 0$} 
        		}
	child{ node [bucketB] {$R_2$}
		edge from parent node[above right]  {$z_{21} = 1$} 
            	}         
    edge from parent node[above left]  {$z_{11} = 0$} 
    }
    child{ node [bucketB] {$R_1$}
	edge from parent node[above right]  {$z_{11} = 1$} 
          }
 ; 
\end{tikzpicture}  
	\end{center}
	\caption{The tree $\TT_\B$ for $n=2$.}
	\label{fig:uno}
\end{figure}

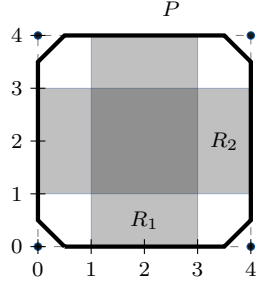
\begin{figure}[ht]
	\begin{center}
		\usetikzlibrary{arrows}

\begin{tikzpicture}[scale=0.7] 
\definecolor{myColor}{rgb} {0.00,0.33,0.68}
\colorlet{myColor2}{myColor!10}

\tikzset{
    myStyle/.style = {draw=dkblue, very thick, rectangle}
}

\draw[dashed, gray] (0,0) -- (4,0) -- (4,4) -- (0,4) -- (0,0);

\draw[draw = myColor, fill = black, opacity=0.25] (0,1) -- (4,1) -- (4,3) -- (0,3) -- (0,1);
\draw[draw = myColor, fill = black, opacity=0.25] (1,0) -- (1,4) -- (3,4) -- (3,0) -- (1,0);

fill = black, opacity=0.1

\node at (2,0.5)   (a) {$R_1$};
\node at (3.5,2)   (b) {$R_2$};

\foreach \x in {0,4}
	\foreach \y in {0,4}
		\draw[draw = myColor, fill = black, opacity=0.9] (\x,\y) circle (2pt);
		
\node at (2.5,4.5)   (a) {$P$};
\draw[line width=1.6pt] (0.5,0)--(3.5,0)--(4,0.5)--(4,3.5)--(3.5,4)--(0.5,4)--(0,3.5)--(0,0.5)--(0.5,0);
\foreach \x in {0,1,2,3,4}
    \draw (\x cm,4pt) -- (\x cm,-4pt) node[anchor=north] {$\x$};
\foreach \y in {0,1,2,3,4}
    \draw (4pt,\y cm) -- (-4pt,\y cm) node[anchor=east] {$\y$};
  
\end{tikzpicture}
	\end{center}
	\caption{The projection of  $R_i\cap P^n_\B$ for $i=1,2$, to $\R^2$.}
	\label{fig:dos}
\end{figure}

We next show that any complete \bbt\ in the original space is approximately at least twice as big as the one described in Proposition \ref{prop:bbt1}.
We will use the following  fact from convex analysis in the proof of the next claim:
If $a^T x \leq b$ is a valid inequality for $X \subseteq \R^n$, then 
\begin{equation}
\conv(X \cap \{ x : a^T x = b\}) = \conv(X) \cap \{a^Tx = b\}.
\label{eq:conv-H}
\end{equation}
%
%

\begin{proposition} Any complete \bbt~$\TT$ of $P^n$ has size at least $2\cdot2^n -1$.
\end{proposition}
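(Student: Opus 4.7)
The plan is to prove by strong induction on $n$ that any complete B\&B tree $\TT$ of $P^n$ satisfies $|\leaf(\TT)|\ge 2^{n+1}-1 = 2\cdot 2^n-1$. Two preliminary observations drive the argument. First, since the leaves of $\TT$ cover $[0,4]^n$ and $\conv(P^n\cap\Z^n)$ is convex, completeness \eqref{eq:complete} is equivalent to the pointwise condition $L\cap P^n\subseteq \conv(P^n\cap\Z^n)$ for every leaf $L$. Second, each of the $2^n$ removed corners $c\in\{0,4\}^n$ must be the label of its own singleton leaf $\{c\}$: assuming without loss of generality that $c=0$, if the leaf $L\ni 0$ satisfies $L\neq\{0\}$ then $L=\prod_i[0,b_i]$ with some $b_j\ge 1$, and the point $p=\tfrac{1}{2}e_j$ lies in $L\cap P^n$ but violates $\sum_i x_i\ge 1$, an inequality valid for $\conv(P^n\cap\Z^n)$ because the only integer point in $[0,4]^n$ with $\sum_i x_i<1$ is the removed origin.

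For the base case $n=1$, the two forced singletons $\{0\},\{4\}$ plus at least one additional leaf covering $[1,3]$ give $\ge 3=2\cdot 2^1-1$ leaves. For the inductive step, set $f(n-1):=2^n-1$, assume the bound for $P^{n-1}$, and consider the root branching of $\TT$, which after relabeling is on $x_1$ at some threshold $t\in\{0,1,2,3\}$. The crucial slice identity, obtained by applying \eqref{eq:conv-H} to the valid inequalities $-x_1\le 0$ and $x_1\le 4$, is
$$
\conv(P^n\cap\Z^n)\cap\{x_1=c\}=\{c\}\times\conv(P^{n-1}\cap\Z^{n-1})\quad\text{for }c\in\{0,4\},
$$
together with the analogous $P^n\cap\{x_1=c\}=\{c\}\times P^{n-1}$.

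Consider the representative case $t=0$. The left sub-tree rooted at $\{0\}\times[0,4]^{n-1}$, after projecting out $x_1$, is a complete B\&B tree for $P^{n-1}$ by the slice identities, so it has at least $f(n-1)$ leaves by induction. For the right sub-tree rooted at $R=[1,4]\times[0,4]^{n-1}$, partition its leaves into \emph{touching} ones (with $b_1=4$, meeting the slice $\{x_1=4\}$) and \emph{non-touching} ones (with $b_1<4$). Every internal branching on $x_1$ within the right sub-tree sends exactly one child to each side, so restricting attention to touching descendants makes these $x_1$-branchings unary; collapsing them produces a genuine binary B\&B tree branching only on $x_2,\ldots,x_n$, and by the $x_1=4$ slice identity it is a complete B\&B tree for $P^{n-1}$, giving $\ge f(n-1)$ touching leaves. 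Moreover the leaf $L'$ containing $(1,0,\ldots,0)\in R$ must have $a_1=1$ and $a_i=0$ for $i>1$; were $L'$ touching it would equal $[1,4]\times\prod_{i>1}[0,b_i]\ni (4,0,\ldots,0)$, and the singleton observation would force $L'=\{(4,0,\ldots,0)\}$, contradicting $(1,0,\ldots,0)\in L'$. Hence at least one non-touching leaf exists, giving $\ge f(n-1)+1$ right-sub-tree leaves and $\ge 2f(n-1)+1=2^{n+1}-1$ in total. The case $t=3$ is symmetric, and for $t\in\{1,2\}$ each of the two children contains $2^{n-1}$ removed corners and the right-sub-tree analysis applies symmetrically to both, yielding $\ge 2(f(n-1)+1)\ge 2f(n-1)+1$ leaves overall.

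The main obstacle is making the touching/non-touching collapse rigorous: one must verify that discarding the non-touching subtrees at each $x_1$-branching of the right sub-tree preserves a legitimate binary B\&B tree structure on $x_2,\ldots,x_n$, and confirm via the slice identity from \eqref{eq:conv-H} that the condition $L\cap P^n\subseteq \conv(P^n\cap\Z^n)$ restricts correctly along $\{x_1=4\}$ to the $(n-1)$-dimensional completeness condition for $P^{n-1}$ needed to apply the inductive hypothesis.
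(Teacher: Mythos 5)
Your proof is correct, and it shares the paper's overall skeleton --- induction establishing $g(n)\ge 2g(n-1)+1$, with the inductive hypothesis invoked by slicing the two subtrees along the faces $x_1=0$ and $x_1=4$ and pushing the hyperplane inside the convex hulls via \eqref{eq:conv-H} --- but the mechanism you use to extract the crucial ``$+1$'' is genuinely different. The paper exhibits the pair $p^1=(0,\dots,0,3)\in\conv(P^n\cap\Z^n)$ and $p^2=(0,\dots,0,3.5)\notin\conv(P^n\cap\Z^n)$, which differ only in the branching coordinate; this forces the right subtree to branch on that coordinate somewhere, and that branching becomes a degenerate node (children labeled $\bar N$ and $\emptyset$) in the sliced tree, whose contraction shows the sliced tree had one leaf more than a complete tree for $P^{n-1}$ needs. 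You instead prove that every removed corner $c\in\{0,4\}^n$ must occupy its own singleton leaf $\{c\}$ (via the valid inequality cutting off that corner), and then use this to show that the leaf containing an integer point such as $(1,0,\dots,0)$ cannot reach the slicing hyperplane, so it is invisible to the sliced tree and counts as an extra leaf. Your singleton-corner lemma is a clean, self-contained observation that immediately yields $2^n$ forced leaves, makes the case analysis over the root threshold $t\in\{0,1,2,3\}$ uniform (the paper instead reduces to $t\in\{0,1\}$ by symmetry), and in my view is somewhat more transparent than the contraction argument. The cost is the same in both proofs: one must carefully handle degenerate $x_1$-branchings when converting the touching/sliced subtree into a legitimate \bbt\ on $x_2,\dots,x_n$ --- you flag this yourself, and the paper's treatment of the same point (replacing $x_n$-branchings by vacuous branchings $x_1\le 4$ versus $x_1\ge 5$) is no more rigorous than yours; your collapse of unary branchings is a fine way to finish, since exactly one child of each $x_1$-branching of a touching node is touching, and discarding the other side leaves a binary tree whose leaves are precisely the touching leaves.
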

\begin{proof}
Let $g(n)$ be the minimum size of a complete branching tree for $P^n$.  
We will prove that $g(n) \geq 2 \cdot 2^{n} -1$ by induction on $n$.  
For $n =1$, $P^1$ is the line segment $[0.5,3.5]$. Clearly, a single branch (with 2 leaves) does not lead to a complete tree and therefore  $g(1) \ge 3 = 2\cdot2^1 -1$.  We now assume $n \geq 2$, and assume the result holds for $P^k$ with $k < n$.

Let $\TT$ be a complete branching tree for $P^n$ with root node label $D = [0,4]^n$.
As $P^n$ is  symmetric we can rename the variables and assume that the first variable branched on is $x_n$ and the successor nodes are $L = \{ x\in D : x_n \leq t\}$ and $R = \{ x\in D : x_n \geq t+1\}$ for some $t \in \{0,1,2,3\}$.  
Any other choice of $t$ would lead to one of $L$ or $R$ being equal to $D$.
In addition, as the hyperplane defined by $x_n=2$ is a plane of symmetry for $P^n$, we can also assume that $t \in\{0,1\} $. Let $\TT_{L}$ be the subtree of $\TT$ rooted at $L$ and let $\TT_{R}$ be the subtree of $\TT$ rooted at $R$. 

Consider first $\TT_L$. 
Let $H = \{ x\in\R^n : x_n = 0\}$ and let $\TTp$  be the tree with the same choice of branches as $\TT_{L}$ but with root node $L \cap H$ 
Thus, for each node $N \in \TT_{L}$, there is a corresponding node in $\TTp$ with the label $N \cap H$.
If the successor nodes of node $A\in\TTp$ are obtained by branching on $x_n$ to $x_n = 0$ and $x_n = 1$, then clearly the left successor node has the same label as $A$ and the right one has the label $\emptyset$. 
Therefore, it is possible to replace the branching conditions on $x_n$ in $\TTp$ with $x_1\le4$ and $x_1\ge5$ to obtain the same labels.
Consequently, one can  obtain a new tree $\tilde\TT$, with identical labels at every node as $\TTp$, that branches only on variables $x_1,\ldots,x_{n-1}$.
We will next show that $\tilde\TT$ (and therefore $\TT_{L}$) has at least $g(n-1)$ leaf nodes.


As $\TT$ is a complete branching tree for $P^n$, it follows that
$ \conv(P^n \cap \Z^n) = \conv(\bigcup_{N \in \leaf(\TT)} N \cap P^n).$
  Intersecting both the left-hand and right-hand terms of the above equation with $H$, and then using equation~\eqref{eq:conv-H}
  to take $H$ inside the convex hull expressions, we obtain
\begin{equation}\label{eqxx}
  \conv(P^n \cap \Z^n \cap H) = \conv(\bigcup_{N \in \leaf(\TT_L)} N \cap (P^n \cap H)).
\end{equation}
The equality above follows from the fact that the intersection of the label of any leaf node of $\TT_R$ with $H$ is the empty set.
For each leaf node $N$ of $\TT_L$, the corresponding leaf node of $\TTp$ is $N \cap H$, and therefore (\ref{eqxx}) implies
  \begin{align*}
\conv((P^n \cap H) \cap \Z^n) = \conv(\bigcup_{N' \in \leaf(\TTp)} N' \cap (P^n \cap H)).
  \end{align*}
 Therefore $\TTp$ is a complete branching tree for $P^n \cap H=P^{n-1}\times \{0\}$ and so is $\tilde\TT$ as both $\tilde\TT$ and $\TTp$ have the same leaf node labels. 
Note that  $P^n\cap\Z^n\cap H =(P^{n-1}\cap \Z^{n-1})\times \{0\} $ and  $\tilde\TT$ only branches on variables $x_1, \dots, x_{n-1}$ and consequently,  $\tilde\TT$ yields a complete branching tree for $P^{n-1}$ after dropping $x_n$.  
Therefore $\tilde\TT$ has at least $g(n-1)$ leaf nodes implying that $\TT_L$ also has at least  $g(n-1)$ leaf nodes.

We now consider $\TT_{R}$, the second part of the tree $\TT$, which is rooted at $R$. 
We will next show that $\TT_{R}$ has at least $g(n-1)+1$ leaf nodes.
In this part of the proof, we let $H = \{ x\in\R^n : x_n = 4\}$ and let $\TTp$ be obtained from $\TT_{R}$ by changing the label of its root node to $R \cap H$. 
Repeating the same arguments used for $\TT_{L}$ earlier, it is easy to see that $\TTp$ has at least  $g(n-1)$ leaf nodes.
Moreover, note that $p^1 ,p^2 \in P^n\cap R$ where $p^1 = (0, 0, \dots, 3)$,  $p^2 = (0, 0, \ldots,3.5)$. 
As $\TT$ is complete and $p^1$ is integral, $\TT_{R}$ has a leaf node containing $p^1$. 
Furthermore, this leaf node cannot contain $p^2$ as it does not belong to $\conv(P^n\cap\Z^n)$.
Notice that the points $p^1$ and $p^2$ only differ in the last coordinate and therefore cannot be separated by a branching decision that involves the first $n-1$ variables.
Consequently, one of the branching conditions in $\TT_R$ (leading to this leaf node) must be on the variable $x_n$.
Therefore, the tree $\TTp$ must contain a node $\bar N$ whose successors are 
labeled $\emptyset$ and $\bar N$. 
Clearly, contracting the edge between these two nodes with the label $\bar N$ and deleting the node labeled $\emptyset$ still yields a complete tree for $P^n\cap H$ with at least one less leaf node than $\TTp$. Therefore, $\TTp$ has at least $g(n-1)+1$ leaf nodes as desired.


Combining the bounds on the leaf nodes of $\TT_L$ and $\TT_{R}$, we conclude that $g(n)  \geq 2g(n -1) + 1 \geq 2 \cdot (2^{n} -1) + 1 = 2 \cdot 2^n -1$.\qed
\end{proof}

\section*{Acknowledgements}
We would like to thank  Andrea Lodi for fruitful discussions on binarization.

\bibliographystyle{amsplain}
\providecommand{\bysame}{\leavevmode\hbox to3em{\hrulefill}\thinspace}
\providecommand{\MR}{\relax\ifhmode\unskip\space\fi MR }
\providecommand{\MRhref}[2]{%
  \href{http://www.ams.org/mathscinet-getitem?mr=#1}{#2}
}
\providecommand{\href}[2]{#2}

\newpage
\section*{Appendix}
\subsection*{\bf  Proof of the second part of Theorem \ref{thm:LG}.}

We now prove that the point $\bar x = (6/5,6/5)\in P$ belongs to $\projk{x,y}{SC(P_{LG})}$.
We first show that  $\bar x\in SC(P)$. Let
\begin{equation*}
p_1 = \left(\begin{array}{c}1 \\ 3/2\end{array}\right),~~~~
p_2 = \left(\begin{array}{c}3/2 \\ 3/2\end{array}\right),~~~~
p_3 = \left(\begin{array}{c}3/2 \\ 1\end{array}\right),
\end{equation*}
and note we can write $\bar x$ as a convex combination of any one of these points and an integral point in $P$, see Figure \ref{fig:proofbypicture2}.
More precisely: $\bar x = 4/5p_1+1/5(2,0) = 2/5p_2+3/5(1,1)= 4/5p_3+1/5(0,2)$.
Therefore, if  $\bar x \not\in \SC(P)$, then  for some split set $S$ we have $\bar x\not\in \conv(P \setminus S)$ and  $p_1,p_2,p_3,\bar x \in S$. Let $\R^2\setminus S=A\cup B\supset\Z^2$ where $A$ and $B$ are half spaces denoting the two sides of the split disjunction. Without loss of generality, assume $(1,1)\in A$.
As $p_1\not \in A$, we have $(2,1)\in B$ and as $p_3\not \in A$, we have $(1,2)\in B$.
But then, $p_2\in B$ as $p_2=1/2(2,1)+1/2(1,2)$ and $\bar x\in B$, a contradiction.
Therefore, $\bar x\in SC(P)$.

\begin{figure}[h]\begin{center}\begin{tikzpicture}[scale=1.5] 
		\draw[dashed, , opacity=0.5] (0,0) -- (2,0) -- (2,2) -- (0,2) -- (0,0);
		\draw [ ->](0,0) -- (0,2.2); \draw [ ->](0,0) -- (2.5,0); 
		
		\draw [fill = black, opacity=0.1] (0,0) -- (2,0) -- (1.81,1.81) -- (0,2) -- (0,0);
		\draw  (0,0) -- (2,0) -- (1.81,1.81) -- (0,2) -- (0,0);
		
		\foreach \x in {0,1,2}\foreach \y in {0,1,2}
		\draw[fill = white] (\x,\y) circle (.75pt);
		
		\foreach \x in {0,1,2} \draw (\x cm,.1pt) -- (\x cm,-.1pt) node[anchor=north] {$\x$};
		\foreach \y in {0,1,2} \draw (.1pt,\y cm) -- (-.1pt,\y cm) node[anchor=east] {$\y$};
		
		\draw[fill = black] (1,3/2) circle (.65pt); \node at (1,1.65) {$p_1$};
		\draw[fill = black]  (3/2,3/2) circle (.65pt); \node at (1.65,1.65) {$p_2$};
		\draw[fill = black] (3/2,1) circle (.65pt);\node at (1.7,1.1) {$p_3$};
		\draw[fill = black] (6/5,6/5) circle (.65pt); \node at (1.2,1) {$\bar x$};
		
		\draw[dashed] (1,3/2) -- (2,0);\draw[dashed] (0,2) -- (3/2,1); \draw[dashed]  (3/2,3/2)-- (1,1); 
		
		\end{tikzpicture}\end{center}
	\caption{Polytope $P$}\label{fig:proofbypicture2}
\end{figure}
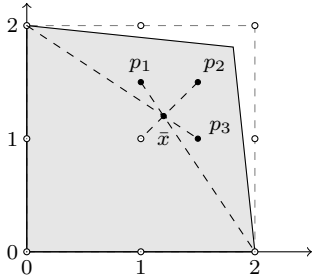

To prove that  $\bar x$ belongs to $\projk{x,y}{SC(P_{LG})}$, we will show that $\bar p = (\bar x, \bar z)\in \SC(P_{LG})$ where
\begin{equation*}
\bar x = \left(\begin{array}{c}6/5 \\ 6/5\end{array}\right), \bar z = \left(\begin{array}{cc}2/5 & 2/5 \\ 2/5 & 2/5\end{array}\right).
\end{equation*}
As $\bar x\in P$ and $\bar p$ satisfies $x_i = z_{i1} + 2 z_{i2}$ for $i=1,2$, we have $\bar p \in P_{LG}$.
Suppose $\bar p \not\in \SC(P_{LG})$.
Then  $\bar p\not\in \conv(P_{LG} \setminus S)$ for some split set $S = \{(x,z) \in\R^{2+4}: d < a^Tx + c\cdot z < d+1\}$ where $c$ is an integral matrix, $a$ is an integral vector and $d$ is an integer, and $c \cdot z = \sum_{ij} c_{ij}z_{ij}$.
Subtracting appropriate multiples of the equations $x_i = z_{i1} + 2z_{i2}$ (valid for $P_{LG}$) from $a^Tx + c \cdot z$, we can assume $a = 0$ and 
$$S=\big\{(x,z) : d < c\cdot z< d+1\big\}$$
for some nonzero $c$.
As $c$ is integral and $\bar z$ is (1/5)-integral, it follows that $c \cdot \bar z = d + \delta$
where $\delta \in\{1/5, 2/5, 3/5,4/5\} $.

We will next construct several pairs of points $p',p''\in P_{LG}$  with the property that $\bar p$ is a convex combination of $p', p''$; if both $p'$ and $p''$ are not contained in $S$, then $\bar p \in \conv(P_{LG} \setminus S)$, a contradiction.
Therefore the split set must contain at least one of $p'$ or $p''$ for each pair, and we will use this fact to impose conditions on  $c$ till we get a contradiction.

First note that if $3/5 \leq x_1 \leq 9/5$ and $3/5 \leq x_2 \leq 9/5$, then $(x_1,x_2) \in P$.
Therefore, for any such $(x_1, x_2)$, choosing $z_{ij}$ values for $i=1,2$ and $j=1,2$ such that $x_1 = z_{11} + 2z_{12}$ and $x_1 = z_{21} + 2z_{22}$,
we get a point in $P_{LG}$.

(i)
Let $d^1 = (d^1_x, d^1_z)$ and $ d^2 = (d^2_x, d^2_z)$  where
\begin{equation}\label{logd1} d^1_x = \left(\begin{array}{c}2/5 \\ 0\end{array}\right), ~d^1_z = \left(\begin{array}{cc}2/5 & 0\\ 0 & 0\end{array}\right) \mbox{ and } d^2_x = \left(\begin{array}{c}0 \\ 2/5\end{array}\right),~ d^2_z = \left(\begin{array}{cc}0 & 0\\ 2/5 & 0\end{array}\right).
\end{equation}
Let $d = d^1 + d^2$ and consider the pair of points $p'=p + d$ and $p''=p - d$ in $P_{LG}$.
Clearly, $\bar p=p'/2+p''/2$,
and for $p'$,  we have $$ c \cdot z'  = c \cdot \bar z + c\cdot (d^1_z+d^2_z) = d+\delta + 2/5(c_{11}+c_{21}),$$
and for $p''$, we have $$ c \cdot z'' = c \cdot \bar z - c\cdot (d^1_z+d^2_z) = d+\delta - 2/5(c_{11}+c_{21}).$$
Therefore, unless $|c_{11} + c_{21}| \leq 1$ both $p',p''$ lie outside the split set $S$ (recall that $1/5 \leq \delta \leq 4/5$).
Therefore, we conclude that $|c_{11} + c_{21}| \leq 1$.
Similarly, letting $d= d^1  - d^2$, we conclude that $|c_{11} - c_{21} | \leq 1$, which implies that $|c_{11}|+|c_{21}| \leq 1$.
Furthermore, as both $P$ and $p$ are symmetric with respect to the coordinates $x_1$ and $x_2$, we can assume that $|c_{11}| \geq |c_{21}|$ and therefore, $|c_{21}|=0$ and $|c_{11}| \leq 1$.
As $\bar x\in\SC(P)$, Proposition \ref{prop:individual-split} implies that $c_{22}\not=0$.


(ii)
Now consider $d^3 = (d^3_x, d^3_z)$ and $ d^4 = (d^4_x, d^4_z)$  where 
\begin{equation}\label{logd3} 
d^3_x = \left(\begin{array}{c}2/5 \\ 0\end{array}\right),~
d^3_z = \left(\begin{array}{cc}-2/5 & 2/5\\ 0 & 0\end{array}\right)\mbox{ and }
d^4_x = \left(\begin{array}{c}0 \\ 2/5\end{array}\right), ~
d^4_z = \left(\begin{array}{cc}0 & 0 \\ -2/5 & 2/5\end{array}\right).
\end{equation}
 Letting $d = d^1 + d^4$ and considering the pair of points $p'=p + d$ and $p''=p - d$ in $P_{LG}$,
we can now argue that 
\beqn |c_{11} + c_{22} - c_{21}| \leq 1.\label{logc1} \eeqn
Similarly, using $d = d^1 - d^4$ we conclude that 
\beqn|c_{11} - c_{22} + c_{21}| \leq 1 .\label{logc2} \eeqn
As  $|c_{21}|=0$, inequalities \eqref{logc1} and \eqref{logc2} 
together imply that $|c_{11}| + |c_{22} | \leq 1 $.
As  $|c_{22} | \geq 1 $ we conclude that  $|c_{11}|=0 $ and $ |c_{22} | = 1 $.
Furthermore, as $|c_{11}|=0 $ we observe that $ |c_{12} | \not = 0 $ by Proposition \ref{prop:individual-split}.

(iii)  Letting $d = d^3 + d^4$ and using points $p'=p + d$ and $p''=p - d$ in $P_{LG}$, we can argue that 
\beqn |c_{12} - c_{11} + ( c_{22} - c_{21})| \leq 1 (\mbox{from }d = d^3 + d^4), \label{logc3} \eeqn
and letting $d = d^3 - d^4$, similarly, we can argue that 
\beqn |c_{12} - c_{11} - ( c_{22} - c_{21})| \leq 1 (\mbox{from }d = d^3 - d^4). \label{logc4}\eeqn
As $ c_{11} = c_{21}=0$, these inequalities simplify to $|c_{12} +  c_{22} | \leq 1 $ and $|c_{12} -  c_{22} | \leq 1 $.
Consequently $|c_{12}| + | c_{22} | \leq 1 $ which gives the desired contradiction as $| c_{22} | =1$ and $|c_{12}|\neq 0 $.\qed

\subsection*{\bf Proof of the second part of Theorem \ref{thm:U}.}
We now prove that the point $(\bar x,\bar y) = [(1.5,1,1.5),(.5,.5,.5)]$ belongs to $\projk{x,y}{SC(P_{LG+})}$.
We will show that $\bar p = (\bar x, \bar y, \bar z)\in \SC(P_{LG+})$ where
\begin{equation*}
\bar x = \left(\begin{array}{c}1.5 \\ 1 \\ 1.5\end{array}\right), \bar y = \left(\begin{array}{cc}.5 \\ .5 \\ .5\end{array}\right), \bar z = \left(\begin{array}{cc}.5 & .5 \\ 0 & .5 \\ .5 & .5\end{array}\right).
\end{equation*}

It is easy to verify that $\bar p \in P_{LG+}$. Suppose $\bar p \not\in \SC(P_{LG+})$.
Then  $\bar p\not\in \conv(P_{LG+} \setminus S)$ for some split set $S = \{(x,y, z) \in\R^{3+3+6}: d < a^Tx + b^Ty  + c\cdot z < d+1\}$ where $c$ is an integral matrix, $a,b$ are integral vectors and $d$ is an integer, and $c \cdot z = \sum_{ij} c_{ij}z_{ij}$.
As in the proof of Theorem \ref{thm:LG}, we can argue that 
$a = 0$ and 
$$\bar p \in S=\big\{(x,y,z) : d < b^Ty + c\cdot z< d+1\big\}.$$
As $b$ and $c$ are integral and $\bar y$ and $\bar z$ are half-integral, it follows that $b^T\bar y  + c \cdot \bar z$ is half-integral and  $$b^T\bar y  + c \cdot \bar z = d + 0.5.$$
Moreover, all points in $P_{LG+}$ satisfy $x_1 + x_2 + x_3 = 4$  and therefore $\sum_{i=1}^3 z_{i1} + 2\sum_{i=1}^3 z_{i2} = 4$.
We can add multiples of this equation to $b^Ty + c\cdot z$ to eliminate the coefficient of $z_{11}$. 
Therefore without loss of generality, we can  assume that $c_{11}=0$. 

We next construct several pairs of points $p',p''\in P_{LG+}$ such that $\bar p=0.5 p'+0.5 p''$; then if both $p'$ and $p''$ lie outside $S$, then $\bar p \in \conv(P_{LG+} \setminus S)$, a contradiction.
Therefore $S$ must contain at least one of $p'$ or $p''$ for each pair, and we will use this fact to impose conditions on $b$ and $c$, till we show that there cannot exist such a split set.
 
(i)
Consider the pair of points $p'=(x', \bar y, z')$ and $p''=(x'', \bar y, z'')$ in $P_{LG+}$ defined by
\[ x' = \left(\begin{array}{c}2 \\ 1 \\ 1\end{array}\right), z' = \left(\begin{array}{cc}1 & .5 \\ 0 & .5 \\ 0 & .5\end{array}\right) \mbox{ and }  x'' = \left(\begin{array}{c}1 \\ 1 \\ 2\end{array}\right), z'' = \left(\begin{array}{cc}0 & .5 \\ 0 & .5 \\ 1 & .5\end{array}\right) \]
and note that $\bar p=0.5 p'+0.5 p''$.
For $p'$, we have $ b^T\bar y + c \cdot z' = b^T\bar y + c \cdot \bar z - .5(c_{31})=d+1/2-.5(c_{31})$ 
and for $p''$, we have $ b^T\bar y + c \cdot z'' = b^T\bar y + c \cdot \bar z + .5(c_{31})=d+1/2+.5(c_{31})$ and clearly unless $c_{31}=0$ both $p',p''$ lie outside the split set $S$.
Therefore, we conclude that 	  $c_{31} = 0$.

(ii) Next consider $p'=(x', \bar y, z')$ and $p''=(x'', \bar y, z'')$ in $P_{LG+}$ defined by
\[ x' = \left(\begin{array}{c}2 \\ 1 \\ 1\end{array}\right),  z' = \left(\begin{array}{cc}0 & 1 \\ 0 & .5 \\ 1 & 0\end{array}\right) \mbox{ and
} , x'' = \left(\begin{array}{c}1 \\ 1 \\ 2\end{array}\right),  z'' = \left(\begin{array}{cc}1 & 0 \\ 0 & .5 \\ 0 & 1\end{array}\right) \]
and note that $\bar p=0.5 p'+0.5 p''$.
For $p'$, we have $b^T\bar y + c \cdot z' = b^T\bar y + c \cdot \bar z + .5(c_{12} - c_{32})=d+1/2+ .5(c_{12} - c_{32})$ and for $p''$ we have $b^T\bar  y + c \cdot z'' = b^T\bar y + c \cdot \bar z - .5(c_{12} - c_{32})=d+1/2 - .5(c_{12} - c_{32})$.
Therefore, unless $c_{12} - c_{32}=0$, both $p',p''$ lie outside the split set $S$ and we conclude that $c_{12} = c_{32}$.

(iii) Next consider $p'=(\bar x, \bar y, z')$ and $p''=(\bar x, \bar y, z'')$ in $P_{LG+}$ defined by
\[	 z' = \left(\begin{array}{cc}0 & .75 \\ 0 & .5 \\ 0 & .75\end{array}\right) \mbox{ and }
  z'' = \left(\begin{array}{cc}1 & .25 \\ 0 & .5 \\ 1 & .25\end{array}\right). \]
For $p'$, we have $b^T\bar y + c \cdot z' = b^T\bar y + c \cdot \bar z + .25(c_{12} + c_{32})= d+1/2 + .5c_{12}$ 
and for $p''$ we have  $b^Ty'' + c \cdot z'' = d+1/2 - .5c_{12}$.
Unless $c_{12} =0$, both $p',p''\not\in S$ and we conclude that $c_{12} = c_{32}=0$.

(iv) Next consider $p'=(x', \bar y, z')$ and $p''=(x'', \bar y, z'')$ in $P_{LG+}$ defined by
	\[ x' = \left(\begin{array}{c}1 \\ 2 \\ 1\end{array}\right), z' = \left(\begin{array}{cc}1 & 0 \\ 0 & 1 \\ 1 & 0\end{array}\right) \mbox{ and }  x'' = \left(\begin{array}{c}2 \\ 0 \\ 2\end{array}\right), z'' = \left(\begin{array}{cc}0 & 1 \\ 0 & 0 \\ 0 & 1\end{array}\right) .\]
For $p'$, we have $b^T\bar y + c \cdot z' = d+1/2 + .5c_{22}$ 
and for $p''$, we have $b^Ty'' + c \cdot z'' = d+1/2 - .5c_{22}$.
Both $p'$ and $p''$ lie outside $S$ unless $c_{22} = 0$. Therefore,  $c_{22} = 0$.
	
(v) Next consider $p'=(x', y', z')$ and $p''=(x'',  y', z'')$ in $P_{LG+}$ defined by
	\[ x' = \left(\begin{array}{c}2 \\0\\2\end{array}\right),
	 y' =  \left(\begin{array}{c}.5\\ 0  \\  .5\end{array}\right),
	 z' = \left(\begin{array}{cc}0 & 1 \\ 0 & 0 \\ 0 & 1\end{array}\right) \mbox{ and } 
	 x'' = \left(\begin{array}{c}1 \\ 2\\  1 \end{array}\right),
	 y'' = \left(\begin{array}{c}.5 \\ 1\\  .5 \end{array}\right),
	  z'' = \left(\begin{array}{cc}1 & 0 \\ 0 & 1 \\ 1 & 0\end{array}\right). \]
For $p'$, we have $b^Ty' + c \cdot z' =d+1/2 + .5b_{2}$
and for $p''$, we have $b^Ty'' + c \cdot z'' = d+1/2 - .5b_{2}$.
Both $p'$ and $p''$ lie outside $S$ unless $b_{2} = 0$. Therefore, $b_2 = 0$.
		
(vi)  Next consider $p'=(x', y', z')$ and $p''=(x'',  y', z'')$ in $P_{LG+}$ defined by
\[ 
x' = \left(\begin{array}{c}3 \\ 0  \\ 1 \end{array}\right), 
y' = \left(\begin{array}{c}1\\  0 \\  .5\end{array}\right), 
z' = \left(\begin{array}{cc}1 & 1 \\ 0 & 0 \\ 0 & .5\end{array}\right) \mbox{ and } 
x'' = \left(\begin{array}{c}0  \\ 2  \\ 2 \end{array}\right), 
y'' = \left(\begin{array}{c}0 \\  1 \\  .5\end{array}\right), 
z'' = \left(\begin{array}{cc}0 & 0 \\ 0 & 1 \\ 1 & .5\end{array}\right) \]
For $p'$, we have  $b^Ty' + c \cdot z' =d+1/2 + .5b_{1}$ and for $p''$, we have $b^Ty'' + c \cdot z'' =d+1/2 - .5b_{1}$.
Both  $p'$ and $p''$ lie outside $S$ unless $b_{1} = 0$. Therefore, $b_1 = 0$. Similarly we can argue that $b_3 = 0$.

Combining these observations, we conclude that all components of $b$ and $c$ have to be zero except $c_{21}$. 
Therefore,  $b^T \bar y + c \cdot \bar z = c_{21}\bar z_{21} = 0$, and  we obtain a contradiction to the fact that $b^T\bar y + c \cdot \bar z$ is half integral, and the proof is complete.\qed

\end{document}